\documentclass[12pt]{amsart}

\usepackage{amsmath}
\usepackage{amssymb}
\usepackage{amscd}
\usepackage[colorlinks=true]{hyperref}
\usepackage{mathrsfs}
\usepackage{upgreek}
\usepackage{mathtools}
\usepackage{enumitem}
\usepackage{icomma}
\usepackage{tabularx}

\usepackage[T1]{fontenc}
\usepackage[normalem]{ulem}

\usepackage{float}
\restylefloat{table}

\usepackage{xy}
\xyoption{all}

\newcommand{\nc}{\newcommand}

\nc{\wt}{\widetilde}
\nc{\rv}{\upupsilon}

\DeclareMathOperator{\BIGr}{S_2Gr}

\nc{\A}{{\mathbb{A}}}
\nc{\CC}{{\mathbb{C}}}
\nc{\LL}{{\mathbb{L}}}
\nc{\RR}{{\mathbb{R}}}

\nc{\OO}{{\mathbb{O}}}
\nc{\QQ}{{\mathbb{Q}}}
\nc{\ZZ}{{\mathbb{Z}}}

\nc{\cA}{{\mathcal{A}}}
\nc{\cB}{{\mathcal{B}}}
\nc{\cC}{{\mathcal{C}}}
\nc{\cD}{{\mathcal{D}}}
\nc{\cE}{{\mathcal{E}}}
\nc{\tcE}{{\tilde{\mathcal{E}}}}
\nc{\cF}{{\mathcal{F}}}
\nc{\cG}{{\mathcal{G}}}
\nc{\cH}{{\mathcal{H}}}
\nc{\cI}{{\mathcal{I}}}
\nc{\cJ}{{\mathcal{J}}}
\nc{\cK}{{\mathcal{K}}}
\nc{\cL}{{\mathcal{L}}}
\nc{\cM}{{\mathcal{M}}}
\nc{\cN}{{\mathcal{N}}}
\nc{\cO}{{\mathcal{O}}}
\nc{\cP}{{\mathcal{P}}}
\nc{\cQ}{{\mathcal{Q}}}
\nc{\cR}{{\mathcal{R}}}
\nc{\cS}{{\mathcal{S}}}
\nc{\cT}{{\mathcal{T}}}
\nc{\cU}{{\mathcal{U}}}
\nc{\tcU}{{\widetilde{\mathcal{U}}}}
\nc{\tcQ}{{\widetilde{\mathcal{Q}}}}
\nc{\cV}{{\mathcal{V}}}
\nc{\cW}{{\mathcal{U}}}
\nc{\cX}{{\mathcal{X}}}
\nc{\cY}{{\mathcal{Y}}}
\nc{\cZ}{{\mathcal{Z}}}

\nc{\rb}{{\mathrm{b}}}
\nc{\rc}{{\mathrm{c}}}
\nc{\rd}{{\mathrm{d}}}
\nc{\rf}{{\mathsf{f}}}
\nc{\rch}{{\mathrm{ch}}}
\nc{\rtd}{{\mathrm{td}}}

\nc{\rB}{{\mathrm{B}}}
\nc{\rC}{{\mathrm{C}}}
\nc{\rF}{{\mathrm{F}}}
\nc{\rG}{{\mathrm{G}}}
\nc{\rH}{{\mathrm{H}}}
\nc{\rK}{{\mathrm{K}}}
\nc{\rL}{{\mathrm{L}}}
\nc{\rM}{{\mathrm{M}}}
\nc{\rP}{{\mathrm{P}}}
\nc{\rR}{{\mathrm{R}}}
\nc{\rS}{{\mathrm{S}}}
\nc{\rT}{{\mathrm{T}}}
\nc{\rW}{{\mathrm{W}}}
\nc{\rX}{{\mathrm{X}}}
\nc{\rZ}{{\mathrm{Z}}}

\nc{\rQ}{{\mathrm{Q}}}

\nc{\bA}{{\mathbf{A}}}
\nc{\bB}{{\mathbf{B}}}
\nc{\bC}{{\mathbf{C}}}
\nc{\bD}{{\mathbf{D}}}
\nc{\bE}{{\mathbf{E}}}
\nc{\bF}{{\mathbf{F}}}
\nc{\bG}{{\mathbf{G}}}
\nc{\bH}{{\mathbf{H}}}
\nc{\bI}{{\mathbf{I}}}
\nc{\bJ}{{\mathbf{J}}}
\nc{\bK}{{\mathbf{K}}}
\nc{\bL}{{\mathbf{L}}}
\nc{\bM}{{\mathbf{M}}}
\nc{\bN}{{\mathbf{N}}}
\nc{\bO}{{\mathbf{O}}}
\nc{\bP}{{\mathbf{P}}}
\nc{\bQ}{{\mathbf{Q}}}
\nc{\bR}{{\mathbf{R}}}
\nc{\bS}{{\mathbf{S}}}
\nc{\bT}{{\mathbf{T}}}
\nc{\bU}{{\mathbf{U}}}
\nc{\bV}{{\mathbf{V}}}
\nc{\bW}{{\mathbf{W}}}
\nc{\bX}{{\mathbf{X}}}
\nc{\bY}{{\mathbf{Y}}}
\nc{\bZ}{{\mathbf{Z}}}

\nc{\ba}{{\mathbf{a}}}
\nc{\bb}{{\mathbf{b}}}
\nc{\bc}{{\mathbf{c}}}
\nc{\bd}{{\mathbf{d}}}
\nc{\be}{{\mathbf{e}}}
\nc{\bg}{{\mathbf{g}}}
\nc{\bh}{{\mathbf{h}}}
\nc{\bi}{{\mathbf{i}}}
\nc{\bj}{{\mathbf{j}}}
\nc{\bk}{{\mathbf{k}}}
\nc{\bl}{{\mathbf{l}}}
\nc{\bm}{{\mathbf{m}}}
\nc{\bn}{{\mathbf{n}}}
\nc{\bo}{{\mathbf{o}}}
\nc{\bp}{{\mathbf{p}}}
\nc{\bq}{{\mathbf{q}}}
\nc{\br}{{\mathbf{r}}}
\nc{\bs}{{\mathbf{s}}}
\nc{\bt}{{\mathbf{t}}}
\nc{\bu}{{\mathbf{u}}}
\nc{\bv}{{\mathbf{v}}}
\nc{\bw}{{\mathbf{w}}}
\nc{\bx}{{\mathbf{x}}}
\nc{\by}{{\mathbf{y}}}
\nc{\bz}{{\mathbf{z}}}

\nc{\fA}{{\mathfrak{A}}}
\nc{\fB}{{\mathfrak{B}}}
\nc{\fC}{{\mathfrak{C}}}
\nc{\fD}{{\mathfrak{D}}}
\nc{\fE}{{\mathfrak{E}}}
\nc{\fF}{{\mathfrak{F}}}
\nc{\fG}{{\mathfrak{G}}}
\nc{\fH}{{\mathfrak{H}}}
\nc{\fI}{{\mathfrak{I}}}
\nc{\fJ}{{\mathfrak{J}}}
\nc{\fK}{{\mathfrak{K}}}
\nc{\fL}{{\mathfrak{L}}}
\nc{\fM}{{\mathfrak{M}}}
\nc{\fN}{{\mathfrak{N}}}
\nc{\fO}{{\mathfrak{O}}}
\nc{\fP}{{\mathfrak{P}}}
\nc{\fQ}{{\mathfrak{Q}}}
\nc{\fR}{{\mathfrak{R}}}
\nc{\fS}{{\mathfrak{S}}}
\nc{\fT}{{\mathfrak{T}}}
\nc{\fU}{{\mathfrak{U}}}
\nc{\fV}{{\mathfrak{V}}}
\nc{\fW}{{\mathfrak{W}}}
\nc{\fX}{{\mathfrak{X}}}
\nc{\fY}{{\mathfrak{Y}}}
\nc{\fZ}{{\mathfrak{Z}}}

\nc{\fa}{{\mathfrak{a}}}
\nc{\fb}{{\mathfrak{b}}}
\nc{\fc}{{\mathfrak{c}}}
\nc{\fd}{{\mathfrak{d}}}
\nc{\fe}{{\mathfrak{e}}}
\nc{\ff}{{\mathfrak{f}}}
\nc{\fg}{{\mathfrak{g}}}
\nc{\fh}{{\mathfrak{h}}}
\nc{\fj}{{\mathfrak{j}}}
\nc{\fk}{{\mathfrak{k}}}
\nc{\fl}{{\mathfrak{l}}}
\nc{\fm}{{\mathfrak{m}}}
\nc{\fn}{{\mathfrak{n}}}
\nc{\fo}{{\mathfrak{o}}}
\nc{\fp}{{\mathfrak{p}}}
\nc{\fq}{{\mathfrak{q}}}
\nc{\fr}{{\mathfrak{r}}}
\nc{\fs}{{\mathfrak{s}}}
\nc{\ft}{{\mathfrak{t}}}
\nc{\fu}{{\mathfrak{u}}}
\nc{\fv}{{\mathfrak{v}}}
\nc{\fw}{{\mathfrak{w}}}
\nc{\fx}{{\mathfrak{x}}}
\nc{\fy}{{\mathfrak{y}}}
\nc{\fz}{{\mathfrak{z}}}

\nc{\sA}{{\mathsf{A}}}
\nc{\sB}{{\mathsf{B}}}
\nc{\sC}{{\mathsf{C}}}
\nc{\sD}{{\mathsf{D}}}
\nc{\sE}{{\mathsf{E}}}
\nc{\sF}{{\mathsf{F}}}
\nc{\sG}{{\mathsf{G}}}
\nc{\sH}{{\mathsf{H}}}
\nc{\sI}{{\mathsf{I}}}
\nc{\sJ}{{\mathsf{J}}}
\nc{\sK}{{\mathsf{K}}}
\nc{\sL}{{\mathsf{L}}}
\nc{\sM}{{\mathsf{M}}}
\nc{\sN}{{\mathsf{N}}}
\nc{\sO}{{\mathsf{O}}}
\nc{\sP}{{\mathsf{P}}}
\nc{\sQ}{{\mathsf{Q}}}
\nc{\sR}{{\mathsf{R}}}
\nc{\sS}{{\mathsf{S}}}
\nc{\sT}{{\mathsf{T}}}
\nc{\sU}{{\mathsf{U}}}
\nc{\sV}{{\mathsf{V}}}
\nc{\sW}{{\mathsf{W}}}
\nc{\sX}{{\mathsf{X}}}
\nc{\sY}{{\mathsf{Y}}}
\nc{\sZ}{{\mathsf{Z}}}

\nc{\sa}{{\mathsf{a}}}
\nc{\sd}{{\mathsf{d}}}
\nc{\se}{{\mathsf{e}}}
\nc{\sg}{{\mathsf{g}}}
\nc{\sh}{{\mathsf{h}}}
\nc{\si}{{\mathsf{i}}}
\nc{\sj}{{\mathsf{j}}}
\nc{\sk}{{\mathsf{k}}}
\nc{\sm}{{\mathsf{m}}}
\nc{\sn}{{\mathsf{n}}}
\nc{\so}{{\mathsf{o}}}
\nc{\sq}{{\mathsf{q}}}
\nc{\sr}{{\mathsf{r}}}
\nc{\st}{{\mathsf{t}}}
\nc{\su}{{\mathsf{u}}}
\nc{\sv}{{\mathsf{v}}}
\nc{\sw}{{\mathsf{w}}}
\nc{\sx}{{\mathsf{x}}}
\nc{\sy}{{\mathsf{y}}}
\nc{\sz}{{\mathsf{z}}}

\nc{\oA}{{\overline{A}}}
\nc{\oB}{{\overline{B}}}
\nc{\oC}{{\overline{C}}}
\nc{\oD}{{\overline{D}}}
\nc{\oE}{{\overline{E}}}
\nc{\oF}{{\overline{F}}}
\nc{\oG}{{\overline{G}}}
\nc{\oH}{{\overline{H}}}
\nc{\oI}{{\overline{I}}}
\nc{\oJ}{{\overline{J}}}
\nc{\oK}{{\overline{K}}}
\nc{\oL}{{\overline{L}}}
\nc{\oM}{{\overline{M}}}
\nc{\oN}{{\overline{N}}}
\nc{\oO}{{\overline{O}}}
\nc{\oP}{{\overline{P}}}
\nc{\oQ}{{\overline{Q}}}
\nc{\oR}{{\overline{R}}}
\nc{\oS}{{\overline{S}}}
\nc{\oT}{{\overline{T}}}
\nc{\oU}{{\overline{U}}}
\nc{\oV}{{\overline{V}}}
\nc{\oW}{{\overline{W}}}
\nc{\oX}{{\overline{X}}}
\nc{\oY}{{\overline{Y}}}
\nc{\oZ}{{\overline{Z}}}

\nc{\oa}{{\overline{a}}}
\nc{\ob}{{\overline{b}}}
\nc{\oc}{{\overline{c}}}
\nc{\od}{{\overline{d}}}
\nc{\of}{{\overline{f}}}
\nc{\og}{{\overline{g}}}
\nc{\oh}{{\overline{h}}}
\nc{\oi}{{\overline{i}}}
\nc{\oj}{{\overline{j}}}
\nc{\ok}{{\overline{k}}}
\nc{\ol}{{\overline{l}}}
\nc{\om}{{\overline{m}}}
\nc{\on}{{\overline{n}}}
\nc{\oo}{{\overline{o}}}
\nc{\op}{{\overline{p}}}
\nc{\oq}{{\overline{q}}}
\nc{\os}{{\overline{s}}}
\nc{\ot}{{\overline{t}}}
\nc{\ou}{{\overline{u}}}
\nc{\ov}{{\overline{v}}}
\nc{\ow}{{\overline{w}}}
\nc{\ox}{{\overline{x}}}
\nc{\oy}{{\overline{y}}}
\nc{\oz}{{\overline{z}}}

\nc{\tA}{{\tilde{A}}}
\nc{\tB}{{\tilde{B}}}
\nc{\tC}{{\tilde{C}}}
\nc{\tD}{{\tilde{D}}}
\nc{\tE}{{\tilde{E}}}
\nc{\tF}{{\tilde{F}}}
\nc{\tG}{{\tilde{G}}}
\nc{\tH}{{\tilde{H}}}
\nc{\tK}{{\tilde{K}}}
\nc{\tL}{{\tilde{L}}}
\nc{\tM}{{\tilde{M}}}
\nc{\tN}{{\tilde{N}}}
\nc{\tO}{{\tilde{O}}}
\nc{\tP}{{\tilde{P}}}
\nc{\tQ}{{\tilde{Q}}}
\nc{\tR}{{\tilde{R}}}
\nc{\tS}{{\tilde{S}}}
\nc{\tT}{{\tilde{T}}}
\nc{\tU}{{\widetilde{U}}}
\nc{\tV}{{\tilde{V}}}
\nc{\tW}{{\tilde{W}}}
\nc{\tX}{{\widetilde{X}}}
\nc{\tY}{{\tilde{Y}}}
\nc{\tZ}{{\tilde{Z}}}

\nc{\tsigma}{\tilde\sigma}

\nc{\ta}{{\tilde{a}}}
\nc{\tb}{{\tilde{b}}}
\nc{\tc}{{\tilde{c}}}
\nc{\td}{{\tilde{d}}}
\nc{\te}{{\tilde{e}}}
\nc{\tf}{{\tilde{f}}}
\nc{\tg}{{\tilde{g}}}
\nc{\tk}{{\tilde{k}}}
\nc{\tl}{{\tilde{l}}}
\nc{\tm}{{\tilde{m}}}
\nc{\tn}{{\tilde{n}}}
\nc{\tp}{{\tilde{p}}}
\nc{\tq}{{\tilde{q}}}
\nc{\tr}{{\tilde{r}}}
\nc{\ts}{{\tilde{s}}}
\nc{\tu}{{\tilde{u}}}
\nc{\tv}{{\tilde{v}}}
\nc{\tw}{{\tilde{w}}}
\nc{\tx}{{\tilde{x}}}
\nc{\ty}{{\tilde{y}}}
\nc{\tz}{{\tilde{z}}}

\nc{\hA}{{\hat{A}}}
\nc{\hB}{{\hat{B}}}
\nc{\hC}{{\hat{C}}}
\nc{\hD}{{\widehat{D}}}
\nc{\hE}{{\hat{E}}}
\nc{\hF}{{\hat{F}}}
\nc{\hG}{{\hat{G}}}
\nc{\hH}{{\hat{H}}}
\nc{\hI}{{\hat{I}}}
\nc{\hJ}{{\hat{J}}}
\nc{\hK}{{\hat{K}}}
\nc{\hL}{{\hat{L}}}
\nc{\hM}{{\hat{M}}}
\nc{\hN}{{\hat{N}}}
\nc{\hO}{{\hat{O}}}
\nc{\hP}{{\hat{P}}}
\nc{\hQ}{{\hat{Q}}}
\nc{\hR}{{\hat{R}}}
\nc{\hS}{{\widehat{S}}}
\nc{\hT}{{\hat{T}}}
\nc{\hU}{{\widehat{U}}}
\nc{\hV}{{\hat{V}}}
\nc{\hW}{{\hat{W}}}
\nc{\hX}{{\hat{X}}}
\nc{\hY}{{\hat{Y}}}
\nc{\hZ}{{\hat{Z}}}

\nc{\ha}{{\hat{a}}}
\nc{\hb}{{\hat{b}}}
\nc{\hc}{{\hat{c}}}
\nc{\hd}{{\hat{d}}}
\nc{\he}{{\hat{e}}}
\nc{\hf}{{\widehat{f}}}
\nc{\hg}{{\hat{g}}}
\nc{\hh}{{\hat{h}}}
\nc{\hi}{{\hat{i}}}
\nc{\hj}{{\hat{j}}}
\nc{\hk}{{\hat{k}}}
\nc{\hl}{{\hat{l}}}
\nc{\hm}{{\hat{m}}}
\nc{\hn}{{\hat{n}}}
\nc{\ho}{{\hat{o}}}
\nc{\hp}{{\hat{p}}}
\nc{\hq}{{\hat{q}}}
\nc{\hr}{{\hat{r}}}
\nc{\hs}{{\hat{s}}}
\nc{\hu}{{\hat{u}}}
\nc{\hv}{{\hat{v}}}
\nc{\hw}{{\hat{w}}}
\nc{\hx}{{\hat{x}}}
\nc{\hy}{{\hat{y}}}
\nc{\hz}{{\hat{z}}}

\nc{\eps}{\varepsilon}
\nc{\lan}{\big\langle}
\nc{\ran}{\big\rangle}
\nc{\kk}{{\Bbbk}}

\nc{\hrG}{\widehat{\rG}}

\nc{\uZ}{\underline{\ZZ}}

\nc{\EE}{\mathbb{E}}

\nc{\et}{{\mathrm{\acute{e}t}}}
\nc{\num}{{\mathrm{num}}}

\nc{\xrightiso}{ \xrightarrow{\ \raisebox{-0.5ex}[0ex][0ex]{$\sim$}\ }}

\def\bw#1#2{\textstyle{\bigwedge\hskip-0.9mm^{#1}}\hskip0.2mm{#2}}

\DeclareMathOperator{\Ker}{\mathrm{Ker}}

\DeclareMathOperator{\Gr}{\mathrm{Gr}}

\newcommand{\Ga}{\mathbb{G}_{\mathrm{a}}}
\newcommand{\Gm}{\mathbb{G}_{\mathrm{m}}}
\DeclareMathOperator{\GL}{\mathrm{GL}}
\DeclareMathOperator{\SL}{\mathrm{SL}}

\nc{\bkk}{{\overline{\kk}}}

\numberwithin{equation}{section}

\theoremstyle{plain}

\newtheorem{theorem}{Theorem}[section]

\newtheorem{lemma}[theorem]{Lemma}
\newtheorem{proposition}[theorem]{Proposition}
\newtheorem{corollary}[theorem]{Corollary}

\theoremstyle{definition}
\newtheorem{definition}[theorem]{Definition}

\newtheorem{example}[theorem]{Example}

\theoremstyle{remark}
\newtheorem{remark}[theorem]{Remark}

\newcommand{\UF}[2]{\binom{#1}{#2}_{\! \rm uf}}
\renewcommand{\SS}[2]{\binom{#1}{#2}_{\! \rm{str}}}
\newcommand{\tdi}{\tilde{\imath}}

\makeatletter
\def\@tocline#1#2#3#4#5#6#7{\relax
  \ifnum #1>\c@tocdepth 
  \else
    \par \addpenalty\@secpenalty\addvspace{#2}%
    \begingroup \hyphenpenalty\@M
    \@ifempty{#4}{%
      \@tempdima\csname r@tocindent\number#1\endcsname\relax
    }{%
      \@tempdima#4\relax
    }%
    \parindent\z@ \leftskip#3\relax \advance\leftskip\@tempdima\relax
    \rightskip\@pnumwidth plus4em \parfillskip-\@pnumwidth
    #5\leavevmode\hskip-\@tempdima
      \ifcase #1
       \or\or \hskip 1em \or \hskip 2em \else \hskip 3em \fi%
      #6\nobreak\relax
    \dotfill\hbox to\@pnumwidth{\@tocpagenum{#7}}\par
    \nobreak
    \endgroup
  \fi}
\makeatother

\makeatletter
\@namedef{subjclassname@2020}{%
  \textup{2020} Mathematics Subject Classification}
\makeatother

\title{Betti Numbers of Odd Biisotropic Symplectic Grassmannians}

\author{Anton Fonarev}
\author{Alexander Kuznetsov}
\address{\sloppy
\parbox{0.95\textwidth}{
{\bf A.F.: }Algebraic Geometry Section, Steklov Mathematical Institute of Russian Academy of Sciences,
8 Gubkin str., Moscow 119991 Russia
\hfill}\bigskip}
\email{avfonarev@mi-ras.ru}
\address{\sloppy
\parbox{0.95\textwidth}{
{\bf A.K.: }Algebraic Geometry Section, Steklov Mathematical Institute of Russian Academy of Sciences,
8 Gubkin str., Moscow 119991 Russia
\hfill
}\bigskip}
\email{akuznet@mi-ras.ru}
\thanks{Both authors were partially supported by the Russian Science Foundation under grant no.
25-11-00057, \url{https://rscf.ru/project/25-11-00057/}.}

\begin{document}

\begin{abstract}
We compute the singular cohomology groups of odd biisotropic symplectic Grassmannians~\mbox{$\BIGr(k, 2n+1)$}
by presenting an explicit dimension-preserving combinatorial bijection
between the Bia\l{}ynicki-Birula cells in~$\BIGr(k, 2n+1)$
and the Schubert cells in~$\Gr(k, 2n+2-k)$.
\end{abstract}

\maketitle

\section{Introduction}

In the present note, we study the singular cohomology groups of odd biisotropic symplectic Grassmannians as introduced in~\cite{K26}.
These are the subvarieties of~$\Gr(k, 2n+1)$ parameterizing $k$-dimensional subspaces of a vector space of dimension~$2n + 1$
to which a given \emph{general} pencil of skew-symmetric forms restricts trivially
(in other words, these are the subspaces isotropic with respect to all the forms
in the pencil).
Since all general pencils are equivalent up to the induced action of~$\GL_{2n+1}$,
for each~$1 \le k \le n+1$, all such varieties are isomorphic,
so we can speak of the biisotropic symplectic Grassmannian~$\BIGr(k, 2n+1)$.
We refer the reader to~\cite[Section~2]{K26} for further details.

The biisotropic symplectic Grassmannian~$\BIGr(k, 2n+1)$ is a smooth Fano variety
of dimension~$2k(n + 1 - k)$. It comes with an action of the stabilizer of the general pencil of skew-forms,
which was identified in~\cite[Lemma~2.6]{K26} with the semidirect product
\begin{equation*}
\bG = \GL_2 \ltimes \Ga^{2n},
\end{equation*}
and it was observed in loc.~cit. that the one-dimensional torus~$\Gm \subset \SL_2 \subset \GL_2 \subset \bG$
acts on~$\BIGr(k, 2n+1)$ with exactly~$\binom{2n+2-k}{k}$ fixed points.
By a theorem of Bia\l{}ynicki-Birula, \cite[Theorem~4.4]{BB},
we conclude that the
odd biisotropic symplectic Grassmannian~$\BIGr(k, 2n+1)$
has an algebraic cellular decomposition;
that is, it decomposes into a disjoint union of locally closed
subvarieties isomorphic to affine spaces.
As a consequence, its singular cohomology groups are free abelian,
the odd cohomology groups vanish, and the rank of the
degree $2d$ cohomology group equals the number
of $d$-dimensional cells in the Bia\l{}ynicki-Birula decomposition.
Surprisingly, it turns out that the latter number equals
the number of $d$-dimensional Schubert cells in the ordinary
Grassmannian $\Gr(k, 2n+2-k)$. The following theorem is our main result.

\begin{theorem}
\label{thm:main}
For each~$1 \le k \le n + 1$ and each~$d\geq 0$,
the number of $d$-dimensional cells in the Bia\l{}ynicki-Birula decomposition of~$\BIGr(k, 2n+1)$
is equal to the number of $d$-dimensional Schubert cells in~$\Gr(k, 2n+2-k)$.
In particular, there is an isomorphism
\begin{equation*}
\rH^\bullet(\BIGr(k, 2n+1), \ZZ) \cong \rH^\bullet(\Gr(k, 2n+2-k), \ZZ)
\end{equation*}
of graded abelian groups.
\end{theorem}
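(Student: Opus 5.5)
We would first make the torus action explicit. A general pencil of skew forms on $V=\kk^{2n+1}$ has a standard Kronecker normal form. Write $V = U\otimes W \oplus \dots$; concretely, $V\cong S^{2n}W$ with $W=\kk^2$, and the pencil is $W^\vee \to \bw2 (S^{2n}W)^\vee$, which is $\SL_2$-equivariant. Choose a basis $e_0,\dots,e_{2n}$ of weight vectors for $\Gm\subset\SL_2$, with weights $2n-2i$ (or an equivalent normalization). In this basis the two forms are $\omega_1(e_i,e_j)=\delta_{j,2n-1-i}\cdot(\pm1)$ and $\omega_2(e_i,e_j)=\delta_{j,2n+1-i}\cdot(\pm1)$, up to signs. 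Since the weights are distinct, the $\Gm$-fixed points of $\BIGr(k,2n+1)$ are the coordinate subspaces $\langle e_i : i\in I\rangle$ with $|I|=k$ that are biisotropic. This happens exactly when $I$ contains no two indices with sum $2n-1$ or $2n+1$. Presumably this is how the count $\binom{2n+2-k}{k}$ in \cite{K26} arises.

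Next we compute the dimension of each Bia\l{}ynicki-Birula cell. For a fixed point $U_I$, the tangent space is the kernel of the map $\mathrm{Hom}(U_I, V/U_I)\to \bw2 U_I^\vee\otimes W$ given by the two forms. This map is surjective because $\BIGr$ has the expected dimension $k(2n+1-k)-2\binom{k}{2}=2k(n+1-k)$. The weights of $\mathrm{Hom}(U_I,V/U_I)$ are the numbers $w_j-w_i$ for $i\in I$, $j\notin I$. The weights of $\bw2U_I^\vee\otimes W$ are $-w_i-w_{i'}\pm 2$ for $i<i'$ in $I$. The cell dimension is the number of positive weights in the first representation minus the number in the second. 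This reduces $d(I)$ to an explicit combinatorial statistic on admissible subsets $I$: an inversion count minus a correction counting pairs $i<i'$ in $I$ with $i+i'$ on a given side of $2n\pm1$.

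Finally we need a bijection. We encode $I\subset\{0,\dots,2n\}$ with no two elements summing to $2n\pm1$ and match it to a $k$-subset $J\subset\{1,\dots,2n+2-k\}$ whose Schubert dimension $\sum_{j\in J}j-\binom{k+1}{2}$ equals $d(I)$. The natural idea is to pair the indices $i \leftrightarrow 2n-i$ symmetrically around the middle index $n$. The admissibility condition then becomes a "no adjacent pairs" condition on a folded sequence. We then use the standard bijection between $k$-subsets of an $m$-set with no two consecutive elements and $k$-subsets of an $(m-k+1)$-set, obtained by subtracting $0,1,\dots,k-1$. With $m=2n+1$ this gives exactly the $\binom{2n+2-k}{k}$ count. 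After that we must adjust the map so that it becomes dimension-preserving, presumably by composing with an involution on one part or by interleaving the two halves of the fold.

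The main obstacle is this last step: making the bijection match dimensions, not just cardinalities. If a direct bijection resists, we would instead compare generating functions. We would show that $\sum_I q^{d(I)}$ satisfies the $q$-Pascal recursion $\binom{m}{k}_q=\binom{m-1}{k-1}_q+q^{k}\binom{m-1}{k}_q$ with $m=2n+2-k$. We would establish the recursion by conditioning on whether an extreme index (say $0$ or $2n$) lies in $I$, and then reducing to the analogous configuration for smaller $n$. The cohomology isomorphism then follows immediately from Bia\l{}ynicki-Birula, as explained before the theorem.
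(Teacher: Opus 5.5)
Your first two steps agree with the paper. In centered coordinates $j\mapsto j-n$, the fixed points are the $U_I$ with $I\subset[-n,n]$ containing no two elements with $i_1+i_2=\pm1$. The cell dimension is the number of positive weights of $U_I^\vee\otimes V/U_I$ minus those of the two twisted copies of $\wedge^2U_I^\vee$, i.e.\ $d(I)-s_+(I)-s_-(I)+k(2n+1-k)/2$ in the paper's notation. (A minor point: $S^{2n}W$ carries no $\SL_2$-equivariant pencil, since all weights of $\wedge^2S^{2n}W$ are even. The right model is $S^nW\oplus S^{n-1}W$, but your torus-level formulas are correct.)

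The gap is the third step, which is the entire content of the theorem. Your fold is, up to orientation, the paper's involution $i\mapsto(-1)^i i$, which turns unit-free sets into sets with no two consecutive elements. Combined with subtracting $0,1,\dots,k-1$, it only proves the count $\binom{2n+2-k}{k}$. It does not preserve dimensions: already for $k=1$ it sends $\{i\}$ to $\{-i\}$ for every $i\neq0$ of one parity, while the two cells have dimensions $n+i$ and $n-i$. ``Composing with an involution on one part or interleaving'' is not a construction. The paper needs a genuinely different map, $I\mapsto\{e_I(i)\}_{i\in I}$ with $e_I(t)=t+\#\{i\in I\mid i<t<-i\}-\#\{i\in I\mid -i<t<i\}$. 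It shows three things:
\begin{itemize}
\item this map sends unit-free sets into strict subsets of $[-n,n]$;
\item it is injective, by induction on $n$ splitting according to $I\cap\{\pm n\}$, plus equal cardinalities;
\item $d(\upepsilon(I))-d(I)=p_-(I)-p_+(I)=\binom k2-s_-(I)-s_+(I)$, where $p_\mp(I)$ count pairs in $I$ with negative and positive sum. This is exactly the required dimension match.
\end{itemize}

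Your generating-function fallback does not close up as described. With $m=2n+2-k$, the term $\binom{m-1}{k}_q=\binom{2n+1-k}{k}_q$ is not of the form $\binom{2n'+2-k}{k}_q$ for an integer $n'$ (one would need $n'=n-\tfrac12$). Concretely, deleting an extreme index not in $I$ leaves the asymmetric interval $[-n,n-1]$ or $[-n+1,n]$, which is not a smaller instance of the problem. Deleting an extreme index that is in $I$, say $n$, also forbids $-n+1$, leaving the non-interval $\{-n\}\cup[-n+2,n-1]$. On that set the shift of the statistic is not even uniform, because the pair $\{-n,n\}$ contributes to $s_-$ but not to $s_+$. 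For $n=k=3$, the unit-free $2$-subsets of $\{-3\}\cup[-1,2]$, weighted by $q^{d-s_+-s_-}$, have generating function $q^{-4}(1+q+\dots+q^5)$, which is not $q^c\binom{4}{2}_q$.

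To rescue this route you would have to do three things:
\begin{itemize}
\item enlarge the induction to the asymmetric intervals $[-m,m-1]$, proving their generating function is $q^{-k(2m+1-k)/2}\binom{2m+1-k}{k}_q$;
\item treat the symmetric interval by conditioning on both endpoints $\pm n$ at once (the four-way splitting in the paper's injectivity proof), so that every piece is again an interval with a uniform shift;
\item verify the resulting two-step $q$-Pascal identity.
\end{itemize}
This is feasible, but none of it is in the proposal. As written, the proposal establishes the fixed points and the dimension formula, not the theorem.
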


\begin{remark}
  We stress the point that the isomorphism constructed in Theorem~\ref{thm:main}
  is that of graded abelian groups.
  Determining the ring structure of $\rH^\bullet(\BIGr(k, 2n+1), \ZZ)$
  is a separate question worth investigating.
\end{remark}

To prove Theorem~\ref{thm:main},
in Section~\ref{sec:birula}, we establish a bijection between the Bia\l{}ynicki-Birula cells in~$\BIGr(k, 2n+1)$
and certain subsets of the set~$\{-n, -n+1, \ldots, n-1, n\}$ which we call
\emph{unit-free} (see Definition~\ref{def:good}). Next, we compute the dimensions
of the cells in terms of combinatorial data.
In~Section~\ref{sec:bijection}, we give a bijection between
unit-free subsets and Schubert cells,
which preserves dimension, thus finishing the proof of Theorem~\ref{thm:main}.

\subsection*{Notation}
For the reader's convenience, we collect some of the notation used below:

\medskip
{
\renewcommand{\arraystretch}{1.2}
\noindent
\begin{tabularx}{\textwidth}{@{}l>{\raggedright\arraybackslash}X@{}}
 $[a, b]$ &
 denotes the set of integers $\{n \mid a \leq n \leq b \}$, \\

 $\# X$ &
 denotes the cardinality of a set $X$, \\

 $\binom{X}{k}$ &
 denotes the set of $k$-element subsets of a set $X$, \\

 $\UF{X}{k}$ &
 denotes the set of $k$-element unit-free subsets of a set $X$, see Definition~\ref{def:good}, \\

 $\SS{X}{k}$ &
 denotes the set of $k$-element strict subsets of a set $X$, see Definition~\ref{def:strict}, \\

 $d(I)$ &
 denotes the sum of the elements in a finite set of integers $I$, i.e., $d(I) = \sum_{i \in I} i$, \\

 $-I$ &
 denotes the negation of a set of integers $I$, i.e., $-I = \{-i \mid i\in I\}$, \\

 $I + t$ &
 denotes the shift by $t$ of a set of integers $I$, i.e., $I + t = \{i + t \mid i\in I\}$, \\

 $[U]$ &
 denotes the point in the Grassmannian~$\Gr(k,V)$ corresponding to a subspace $U\subset V$.

\end{tabularx}}

\section{Cohomology of odd biisotropic symplectic Grassmannians}
\label{sec:birula}

In this section, we first identify the $\Gm$-fixed points on~$\BIGr(k, 2n+1)$
with certain subsets of the set~$[-n, n]$,
which we call unit-free (see Definition~\ref{def:good}),
and then explain how to compute the dimensions of the corresponding Bia\l{}ynicki-Birula cells
in terms of these subsets, thus reducing Theorem~\ref{thm:main} to a purely combinatorial problem.

We begin by recalling an explicit description of a general pencil of skew-forms from~\cite[Lemma~2.2 and Remark~2.3]{K26}.
For convenience, we replace the bases used in~\cite{K26} by the weight bases of~$\Gm$.

Let~$V$ be a $(2n+1)$-dimensional representation of~$\Gm$ on which~$\Gm$ acts with weights~$-n, \dots, n$,
and let~$V^\vee$ be the dual representation.
Let~$\{e_i\}$ and~$\{x_i\}$, for $-n \le i \le n$, be the dual eigenbases of~$V$ and~$V^\vee$
such that~$\Gm$ acts on~$e_i$ and~$x_i$ with weights~$-i$ and~$i$, respectively.
Consider the skew-forms on~$V$ defined by
\begin{equation}
\label{eq:sigma_i}
\sigma_{-1} = \sum_{i = 1}^n x_{-i} \wedge x_{i-1}
\qquad\text{and}\qquad
\sigma_{1} = \sum_{i = 1}^n x_{1-i} \wedge x_{i}.
\end{equation}
Note that~$\sigma_{-1}$ and~$\sigma_1$ are $\Gm$-equivariant with weights~$-1$ and~$1$, respectively.

\begin{definition}
The odd biisotropic symplectic Grassmannian~$\BIGr(k, 2n+1)$ is the subvariety of~$\Gr(k, V)$
parameterizing those $k$-dimensional subspaces~$U \subset V$
which are simultaneously
isotropic with respect to both $\sigma_{-1}$ and~$\sigma_1$; that is,
\begin{equation*}
\sigma_{-1}\vert_U = \sigma_{1}\vert_U = 0.
\end{equation*}
We naturally call such subspaces~$U$ {\sf biisotropic}.
\end{definition}

Since~$\sigma_{-1}$ and~$\sigma_1$ are $\Gm$-equivariant, the subvariety~$\BIGr(k, V) \subset \Gr(k, V)$ is $\Gm$-invariant.
By~\cite[Lemma~2.9 and Proposition~2.10]{K26}, it is smooth and proper, and has finitely many $\Gm$-fixed points;
therefore, by~\cite[Theorem~4.4]{BB}, the variety~$\BIGr(k, V)$ admits an algebraic cellular decomposition
whose cells are indexed by the $\Gm$-fixed points.
Moreover, the dimension of the cell corresponding to a $\Gm$-fixed point~$[U] \in \BIGr(k, V)$
is equal to the dimension of the positive eigenspace~$\rT^+_{\BIGr(k, V), [U]}$
of the induced $\Gm$-action on the tangent space to~$\BIGr(k, V)$ at~$[U]$.

Again, since $\BIGr(k, V) \subset \Gr(k, V)$ is $\Gm$-invariant,
the $\Gm$-fixed points on $\BIGr(k, V)$ are precisely the biisotropic $\Gm$-fixed points on $\Gr(k, V)$,
and since the weights of the $\Gm$-action on~$V$ are all disjoint,
the latter are precisely the coordinate subspaces in the given eigenbasis:
\begin{equation}
\label{eq:ui}
  U_I = \langle e_{i_1}, \dots, e_{i_k} \rangle,
\end{equation}
where~$I = \{i_1, i_2, \ldots, i_k\} \subset [-n, n]$ is the corresponding set of weights.

\begin{definition}
\label{def:good}
We call a subset~$I \subset [-n, n]$ {\sf unit-free} if
\begin{equation*}
\forall i_1, i_2 \in I,
\quad
i_1 + i_2 \ne \pm 1,
\end{equation*}
and denote by~$\UF{[-n, n]}{k}$ the set of all unit-free subsets of~$[-n, n]$ of cardinality~$k$.
\end{definition}

Note that~$\UF{[-n, n]}{k} = \varnothing$ for~$k > n + 1$,
and that the only unit-free subset in~$[-n,n]$ of cardinality~$n + 1$ is~$I = \{ i \in [-n,n] \mid i \equiv n \pmod 2 \}$.
(Both statements can be deduced from Corollary~\ref{cor:card} below.)

It turns out that unit-free subsets index biisotropic $\Gm$-invariant subspaces.

\begin{lemma}
\label{lem:bigr-fp}
The $\Gm$-invariant subspace~$U_I \subset V$
corresponding to a $k$-element subset $I\in \binom{[-n, n]}{k}$
is biisotropic if and only if $I$ is unit-free.
\end{lemma}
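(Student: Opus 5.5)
The plan is to compute the restrictions of $\sigma_{-1}$ and $\sigma_1$ to $U_I$ directly in the dual bases. Since $\{e_i\}$ and $\{x_i\}$ are dual, a decomposable form $x_a\wedge x_b$ evaluates on the pair $(e_p,e_q)$ as
\begin{equation*}
x_a(e_p)x_b(e_q)-x_a(e_q)x_b(e_p) = \delta_{ap}\delta_{bq}-\delta_{aq}\delta_{bp}.
\end{equation*}
It follows that a skew-form $\sigma=\sum c_{ab}\,x_a\wedge x_b$ restricts to zero on $U_I$ if and only if the coefficient of $x_p\wedge x_q$ (after antisymmetrizing) vanishes for all $p,q\in I$. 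Put differently, $\sigma|_{U_I}=0$ if and only if no monomial $x_a\wedge x_b$ occurring in $\sigma$ with nonzero coefficient has both indices $a,b$ in $I$.

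The next step is to read off from~\eqref{eq:sigma_i} which monomials occur. In $\sigma_{-1}$ the terms are $x_{-i}\wedge x_{i-1}$ for $1\le i\le n$, and each index pair has sum $-1$. Conversely, every unordered pair $\{a,b\}\subset[-n,n]$ with $a+b=-1$ has the form $\{-i,i-1\}$ with $1\le i\le n$: we may take $a\le -1<0\le b$, set $i=-a$, and then $b=i-1$ lies in $[0,n-1]$. These pairs are distinct for distinct $i$, so no cancellation occurs. Likewise, the terms of $\sigma_1$ are exactly $x_{1-i}\wedge x_i$, one for each pair $\{a,b\}$ with $a+b=1$, again without repetition.

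Hence $\sigma_{-1}|_{U_I}=0$ if and only if no two elements of $I$ sum to $-1$, and $\sigma_1|_{U_I}=0$ if and only if no two elements sum to $1$. The two forms vanish on $U_I$ together exactly when $I$ is unit-free.

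One subtlety concerns the case $i_1=i_2$ in Definition~\ref{def:good}. It is harmless, because $2i\neq\pm1$ for an integer $i$, so only pairs of distinct elements matter; this matches the fact that the pairs $\{a,b\}$ appearing in the forms always have $a\ne b$. There is no real obstacle here. The only point requiring care is the bijection between the terms of each $\sigma_{\pm1}$ and the pairs with sum $\pm1$ inside $[-n,n]$, including the boundary indices $\pm n$.
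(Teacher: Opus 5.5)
Your proof is correct and follows the same route as the paper, which simply observes that $\sigma_{-1}$ and $\sigma_1$ are the sums of all $x_{j_1}\wedge x_{j_2}$ with $j_1+j_2=-1$ and $j_1+j_2=1$, respectively, and concludes by inspection. You supply the details the paper leaves implicit: restriction to $U_I$ kills exactly the monomials with an index outside $I$, and each pair in $[-n,n]$ with sum $\pm1$ occurs exactly once, so no cancellation can happen.
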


\begin{proof}
This follows by inspection, since~$\sigma_{-1}$ and~$\sigma_1$ are the sums of all~$x_{j_1} \wedge x_{j_2}$ with~$j_1 + j_2 = -1$ and~$j_1 + j_2 = 1$, respectively.
\end{proof}

To compute the dimension of the space~$\rT^+_{\BIGr(k, V), [U_I]}$
for a unit-free subset $I$
(which equals the dimension of the corresponding affine cell),
we introduce the following invariants:
\begin{equation}
\label{def:drs}
s_+(I) = \#\Big\{\{i_1, i_2\} \subset I \mid i_1 + i_2 >  1 \Big\}, \qquad
s_-(I) = \#\Big\{\{i_1, i_2\} \subset I \mid i_1 + i_2 > -1 \Big\}.
\end{equation}
Recall from the introduction that~$d(I) = \sum_{i \in I} i$.

\begin{lemma}
\label{lem:dim-tplus}
For any~$I \in \UF{[-n, n]}{k}$, we have
\begin{equation}
\label{eq:dimtplus}
\dim \rT^+_{\BIGr(k, V), [U_I]} = d(I) - s_+(I) - s_-(I) + k(2n + 1 - k)/2.
\end{equation}
\end{lemma}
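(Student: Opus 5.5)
Since $\BIGr(k,V)$ is smooth of dimension $2k(n+1-k)$ and is cut out in $\Gr(k,V)$ as the zero locus of the section of $\bw2{\cU^\vee}\otimes W$ induced by the pencil, with $W=\langle\sigma_{-1},\sigma_1\rangle$ (the expected codimension $2\binom{k}{2}=k(k-1)$ matches $k(2n+1-k)-2k(n+1-k)$), the normal sequence gives a $\Gm$-equivariant exact sequence
\begin{equation*}
0\to \rT_{\BIGr(k,V),[U_I]}\to \rT_{\Gr(k,V),[U_I]}\to \bw2{U_I^\vee}\otimes W\to 0
\end{equation*}
at each fixed point. Taking positive-weight parts is exact, so
\begin{equation*}
\dim\rT^+_{\BIGr(k,V),[U_I]}=\dim\rT^+_{\Gr(k,V),[U_I]}-\dim\big(\bw2{U_I^\vee}\otimes W\big)^+.
\end{equation*}
Surjectivity on the right follows from smoothness and the dimension count; I take the zero-locus description (regularity of the section) from \cite{K26}.

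Next I compute the weights. We have $\rT_{\Gr,[U_I]}=U_I^\vee\otimes V/U_I$, with weights $i-j$ for $i\in I$, $j\notin I$, since $e_j$ has weight $-j$ and $x_i$ has weight $i$. So $\dim\rT^+_{\Gr}=\#\{(i,j): i\in I,\ j\in[-n,n]\setminus I,\ i>j\}$. For a fixed $i$, the number of $j\in[-n,n]$ with $j<i$ is $i+n$, and those lying in $I$ contribute ranks. Summing over $i$ gives $\sum_{i\in I}(i+n)-\binom{k}{2}=d(I)+kn-k(k-1)/2$. For the second term, I need the weight of $W$. Since $\sigma_{\pm1}\in\bw2{V^\vee}$ has weight $\pm1$, and the equivariant section of $\bw2{\cU^\vee}\otimes W$ must be invariant, $W$ carries the dual weights $\mp1$. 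The fiber $\bw2{U_I^\vee}\otimes W$ is then spanned by $x_{i_1}\wedge x_{i_2}\otimes\sigma_{\pm1}^*$, of weight $i_1+i_2\mp1$. Positivity means $i_1+i_2>1$ or $i_1+i_2>-1$, so the count is $s_+(I)+s_-(I)$. Finally,
\begin{equation*}
d(I)+kn-\tfrac{k(k-1)}{2}-s_+(I)-s_-(I)=d(I)-s_+(I)-s_-(I)+\tfrac{k(2n+1-k)}{2},
\end{equation*}
which is \eqref{eq:dimtplus}.

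The main obstacle is the sign and weight bookkeeping: the weight of $W$, and the convention that a tangent vector of weight $i-j$ counts as positive. This needs care, though the resulting formula is symmetric enough to check. I would verify it on $k=1$, where $\BIGr(1,V)=\PP(V)$ and the cell for $e_i$ should have dimension $i+n$, and $s_\pm=0$ there. Unit-freeness guarantees that $i_1+i_2\neq\pm1$, so no weight-zero terms occur in the normal space. This also confirms that $[U_I]$ is an isolated fixed point, but it is not needed for the count.
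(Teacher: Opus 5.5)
Your proof is correct and is essentially the paper's argument. Both identify $\rT_{\BIGr(k,V),[U_I]}$ as the kernel of $(\rd\sigma_{-1},\rd\sigma_1)$ on $U_I^\vee\otimes V/U_I$ and count positive weights on each side; you justify surjectivity onto the normal space by smoothness and the dimension count, rather than through the paper's explicit formulas for the differentials. Strictly speaking, if $W$ denotes the span of the forms, the normal space is $\wedge^2 U_I^\vee\otimes W^\vee$ rather than $\wedge^2 U_I^\vee\otimes W$, but your ``dual weights'' remark handles exactly this, so the count $s_+(I)+s_-(I)$ is right.
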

\begin{proof}
We begin by computing the dimension of the positive eigenspace
$\dim \rT^+_{\Gr(k, V), [U_I]}$ for
the ambient action on the Grassmannian.
The tangent space~$\rT_{\Gr(k, V), [U_I]}$ to~$\Gr(k, V)$ at~$[U_I]$ is isomorphic to~$U_I^\vee \otimes V/U_I$.
Since~$U_I$ is spanned by~$e_i$ for~$i \in I$, the tangent space is spanned by~$x_{i} \otimes e_{j}$
for~$i \in I$ and~$j \in [-n, n] \setminus I$.
The weight of~$x_{i} \otimes e_{j}$ equals~$i - j$, hence the positive eigenspace~$\rT^+_{\Gr(k, V), [U_I]}$ is spanned by
\begin{equation*}
x_{i} \otimes e_{j},
\qquad\text{for~$i \in I$ and $j \in [-n, n] \setminus I$ such that~$i > j$}.
\end{equation*}
Write $I=\{i_1, \ldots, i_k\}$, where we order the elements so that $i_1<i_2<\cdots<i_k$.
Since
\begin{equation*}
\# \{j \in [-n, n] \setminus I \mid j < i_p\} = \# [-n, i_p-1] - \#\{i_1, \ldots, i_{p-1}\} = i_p + n + 1 - p,
\end{equation*}
the dimension of the positive eigenspace is equal to
\begin{equation}
\label{eq:dimtgr}
\sum_{p = 1}^k (i_p + n + 1 - p) =
\sum_{p = 1}^k i_p + k(n+1) - \frac{k(k+1)}{2} =
d(I) + k(2n + 1 - k)/2.
\end{equation}

Meanwhile, the tangent space to~$\BIGr(k, V)$ at~$[U]$ is cut out in the tangent space to~$\Gr(k, V)$
by the differentials~$\rd \sigma_{-1}, \rd \sigma_1 \colon U^\vee \otimes V/U \to \wedge^2U^\vee$
of the skew-forms~$\sigma_{-1}$ and~$\sigma_1$; in other words,
\begin{equation*}
\rT_{\BIGr(k, V), [U_I]} =
\Ker\Big( U_I^\vee \otimes V/U_I \xrightarrow{\quad(\rd \sigma_{-1}, \rd \sigma_1)\quad} \wedge^2U_I^\vee \oplus \wedge^2U_I^\vee \Big).
\end{equation*}
Using~\eqref{eq:sigma_i}, we compute
\begin{equation*}
\rd\sigma_{-1}(x_i \otimes e_j) =
\begin{cases}
\pm x_i \wedge x_{-j-1}, & \text{if~$-j-1 \in I$}, \\
0, & \text{otherwise},
\end{cases}
\qquad
\rd\sigma_1(x_i \otimes e_j) =
\begin{cases}
\pm x_i \wedge x_{-j+1}, & \text{if~$-j+1 \in I$}, \\
0, & \text{otherwise},
\end{cases}
\end{equation*}
where the signs in the right-hand sides depend on whether $j$ is positive or negative.
Note that if~$i' \in I$ and~$-j \pm 1 = i'$, then~$i' + j = \pm 1$, hence~$j \not\in I$ by the definition of a unit-free subset.
Thus, the linear equations of~$\rT_{\BIGr(k, V), [U_I]}$ in~$\rT_{\Gr(k, V), [U_I]}$
are indexed by two copies of the set of all pairs~$\{i, i'\} \subset I$
(in particular, its codimension is~$2 \cdot k(k-1)/2 = k(k-1)$),
and the linear equations of the positive eigenspace~$\rT^+_{\BIGr(k, V), [U_I]}$ in~$\rT^+_{\Gr(k, V), [U_I]}$
are indexed by the sets of pairs
\begin{equation*}
\Big\{ \{i, i'\} \subset I \ \ \Big|\ \   i - (-1 - i') > 0 \Big\}
\quad \text{and} \quad
\Big\{ \{i, i'\} \subset I \ \ \Big|\ \   i - (1 - i') > 0 \Big\},
\end{equation*}
whose cardinalities are~$s_-(I)$ and~$s_+(I)$, respectively.
We finally conclude that the codimension of the subspace~$\rT^+_{\BIGr(k, V), [U_I]}$ in~$\rT^+_{\Gr(k, V), [U_I]}$ equals~$s_-(I) + s_+(I)$,
and, from~\eqref{eq:dimtgr}, we see that its dimension equals~$d(I) + k(2n + 1 - k)/2 - s_-(I) - s_+(I)$, as required.
\end{proof}

\section{Bijection}
\label{sec:bijection}

By Lemmas~\ref{lem:bigr-fp} and~\ref{lem:dim-tplus}, to prove Theorem~\ref{thm:main},
it suffices to construct a bijection between the set
of unit-free subsets~$I \in \UF{[-n, n]}{k}$
for which~$d(I) - s_-(I) - s_+(I) + k(2n + 1 - k)/2 = d$
and the set of $d$-dimensional Schubert cells in~$\Gr(k, 2n+2-k)$.
We begin by discussing some combinatorics related to Schubert cells.

Throughout the section, whenever we write~$I = \{i_1, i_2, \ldots, i_k\}$ for a set of~$k$ integers,
we implicitly assume that the elements come in increasing order: $i_1 < i_2 < \ldots < i_k$.

\subsection{Schubert cells and strict subsets}

Recall that one way to index Schubert cells in
the Grassmannian $\Gr(k, 2n+2-k)$ is by $k$-element subsets of $[1, 2n+2-k]$;
that is, by $\binom{[1, 2n+2-k]}{k}$.
Specifically, as in~\cite[\S 1.1]{Bri}, let~$W$ be a vector space of dimension~$2n+2-k$.
Fix a complete flag of vector subspaces
\begin{equation*}
  0 = W_0 \subset W_1 \subset W_2 \subset \ldots \subset W_{2n+2-k} = W,
\end{equation*}
where $\dim W_i = i$.
For a vector subspace $U\subset W$, define the set of jumps
\begin{equation*}
  J(U) = \{ i \in [1, 2n+2-k] \mid U \cap W_{i-1} \subsetneq U \cap W_{i} \}.
\end{equation*}
The set $J(U)$ consists of those indices $i \in [1, 2n+2-k]$
at which the dimension $\dim (U \cap W_i)$ increases;
in particular, if~$U$ has dimension $k$, then~$\# J(U)=k$, hence~$J(U) \in \binom{[1, 2n+2-k]}{k}$.

Let $I=\{i_1, i_2, \ldots, i_k\}$ be a subset of~$[1, 2n+2-k]$;
that is, $I\in \binom{[1, 2n+2-k]}{k}$.
The Schubert cell corresponding to~$I$ is defined by
\begin{equation*}
 \Sigma_I = \big\{[U] \in \Gr(k, W) \mid J(U) = I\big\}.
\end{equation*}
The dimension of~$\Sigma_I$ equals
\begin{equation}
\label{eq:dimci}
  \dim (\Sigma_I) = \sum_{p=1}^k (i_p - p) = d(I) - k(k+1)/2.
\end{equation}

For our purposes, it will be more convenient to index Schubert cells
by the subset of~$\binom{[-n,n]}{k}$ defined below,
whose name was chosen by analogy with the corresponding terminology for Young diagrams since
a set~$I=\{i_1, i_2, \ldots, i_k\}$ is strict if and only if $i_{t+1}-i_t\geq 2$ for all $t=1, \ldots, k-1$.

\begin{definition}
\label{def:strict}
Call a set of integers $I$ {\sf strict}
if
\begin{equation*}
\forall i_1, i_2 \in I,
\quad
i_1 - i_2 \ne \pm 1.
\end{equation*}
Given a set of integers $X$, we denote by $\SS{X}{k}\subset \binom{X}{k}$ the set of its strict subsets.
\end{definition}

For $I=\{i_1, i_2, \ldots, i_k\}$ and~$m \in \ZZ$, introduce the {\sf shifted strictification} of~$I$ as
\begin{equation*}
\label{eq:smi}
  \upsigma_m(I)=\{i_1 + m, i_2 + m + 1, i_3 + m + 2, \ldots, i_k + m + (k - 1)\}.
\end{equation*}
The following statement is obvious.

\begin{lemma}
\label{lm:ss}
  For all integers $a$, $b$, and $m$, the mapping $I\mapsto \upsigma_m(I)$ defines a bijection
  \begin{equation*}
    \binom{[a, b]}{k} \xrightarrow{\ \sim\ }{} \SS{[a + m, b + m + k - 1]}{k}
  \end{equation*}
  with the inverse given by
  \begin{equation*}
    I \mapsto \upsigma_m^{-1}(I) = \{i_1 - m, i_2 - m - 1, i_3 - m - 2, \ldots, i_k - m - (k - 1)\}.
  \end{equation*}
\end{lemma}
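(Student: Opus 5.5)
We have to prove Lemma~\ref{lm:ss}: the map $I\mapsto\upsigma_m(I)$ is a bijection from $\binom{[a,b]}{k}$ onto $\SS{[a+m,b+m+k-1]}{k}$, with the stated inverse. The plan is to check three things directly: the map lands in the target, the proposed inverse lands in the source, and the two maps are mutually inverse. Throughout, a set $I=\{i_1<\dots<i_k\}$ is identified with its increasing sequence, using the remark before Definition~\ref{def:strict} that a set is strict if and only if consecutive elements differ by at least~$2$.

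First take $I=\{i_1<\dots<i_k\}\subset[a,b]$ and put $j_p=i_p+m+(p-1)$. Since $i_{p+1}-i_p\ge 1$, we get $j_{p+1}-j_p=(i_{p+1}-i_p)+1\ge 2$. So the $j_p$ are strictly increasing, hence distinct, and $\upsigma_m(I)$ really has $k$ elements; moreover consecutive gaps are at least~$2$, so $\upsigma_m(I)$ is strict. Its extreme values are $j_1=i_1+m\ge a+m$ and $j_k=i_k+m+k-1\le b+m+k-1$, so it lies in $[a+m,b+m+k-1]$.

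Conversely take a strict $J=\{j_1<\dots<j_k\}\subset[a+m,b+m+k-1]$ and set $i_p=j_p-m-(p-1)$. Then $i_{p+1}-i_p=(j_{p+1}-j_p)-1\ge 1$, so the $i_p$ are strictly increasing and give a $k$-element set. The bounds $i_1=j_1-m\ge a$ and $i_k=j_k-m-(k-1)\le b$ put this set in $[a,b]$.

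Both maps preserve the increasing order of the listed elements, so the $p$-th smallest element is sent to the $p$-th smallest element. Hence composing them in either order adds and then subtracts $m+(p-1)$ at position~$p$, and returns the original set. The whole argument is routine; the only point needing care is this order preservation, which justifies computing the inverse termwise.
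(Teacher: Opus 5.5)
Your proposal is correct. It is exactly the routine termwise verification that the paper leaves to the reader when it calls the lemma obvious: the map sends $k$-subsets of $[a,b]$ to strict subsets of the shifted interval, the inverse lands back in $\binom{[a,b]}{k}$, and because both maps preserve order the two compositions are the identity.
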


Now we can state the following.

\begin{lemma}
\label{lem:dim-si}
There is a bijection between the set of strict subsets~$\SS{[-n, n]}{k}$ and the set of Schubert cells in~$\Gr(k, 2n+2-k)$
such that the dimension of the Schubert cell corresponding to~$I \in \SS{[-n, n]}{k}$ equals
\begin{equation}
\label{eq:dimschu}
d(I) + k(n + 1 - k).
\end{equation}
\end{lemma}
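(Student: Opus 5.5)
The plan is to compose the standard indexing of Schubert cells by $\binom{[1, 2n+2-k]}{k}$ with the shifted strictification of Lemma~\ref{lm:ss}, and then track the dimension formula~\eqref{eq:dimci} through this composition.

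First, we choose the shift. Apply Lemma~\ref{lm:ss} with $a = 1$, $b = 2n+2-k$, and $m$ chosen so that the target interval is $[-n, n]$. We need $1 + m = -n$, that is, $m = -n-1$. The right endpoint is then
\begin{equation*}
b + m + k - 1 = 2n+2-k - n - 1 + k - 1 = n,
\end{equation*}
as required. So $\upsigma_{-n-1}$ gives a bijection
\begin{equation*}
\binom{[1, 2n+2-k]}{k} \xrightarrow{\ \sim\ } \SS{[-n, n]}{k}.
\end{equation*}
Composing its inverse with $I \mapsto \Sigma_I$ gives the desired bijection between strict subsets and Schubert cells in~$\Gr(k, 2n+2-k)$.

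Next, we compute the dimension. For $J \in \SS{[-n,n]}{k}$, set $I = \upsigma_{-n-1}^{-1}(J)$. By Lemma~\ref{lm:ss}, its elements are $j_p + n + 1 - (p-1) = j_p + n + 2 - p$. Hence
\begin{equation*}
d(I) = d(J) + k(n+2) - \frac{k(k+1)}{2}.
\end{equation*}
Plugging this into~\eqref{eq:dimci} yields
\begin{equation*}
\dim \Sigma_I = d(J) + k(n+2) - k(k+1) = d(J) + k(n+1-k),
\end{equation*}
which is~\eqref{eq:dimschu}.

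There is no real obstacle here. The only care needed is the bookkeeping of the shift $m$ and of the index offsets $p-1$ versus $p$ in the arithmetic above.
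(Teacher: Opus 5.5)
Your proposal is correct and follows the paper's proof exactly: you compose the bijection $\upsigma_{-n-1}$ from Lemma~\ref{lm:ss} with the standard indexing of Schubert cells and substitute into~\eqref{eq:dimci}. Your intermediate expression $k(n+2) - k(k+1)/2$ equals the paper's $k(n+1) - k(k-1)/2$, so the arithmetic agrees.
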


\begin{proof}
Consider the bijection from Lemma~\ref{lm:ss}:
\begin{equation*}
  \upsigma_{-n-1} \colon \binom{[1, 2n+2-k]}{k} \xrightarrow{\quad\sim\quad}
  \SS{[-n, n]}{k}.
\end{equation*}
Since the left-hand side is in bijection with the Schubert cells,
it remains to compute the dimension of the Schubert cell corresponding to~$\upsigma_{-n-1}^{-1}(I)$ for~$I \in \SS{[-n, n]}{k}$.
By~\eqref{eq:dimci}, it equals
\begin{equation*}
  \begin{split}
  \dim (\Sigma_{\upsigma_{-n-1}^{-1}(I)}) & =
  d(\upsigma_{-n-1}^{-1}(I)) - k(k+1)/2 \\
  & = \Big(d(I) + k(n+1) - k(k-1)/2\Big) - k(k+1)/2
  = d(I) + k(n + 1 - k),
  \end{split}
\end{equation*}
hence the lemma.
\end{proof}

Comparing the dimensions of the cells in~$\BIGr(k, 2n + 1)$ and~$\Gr(k, 2n+2-k)$
given in~\eqref{eq:dimtplus} and~\eqref{eq:dimschu}, respectively, we
can now formulate the key proposition from which Theorem~\ref{thm:main} follows directly.

\begin{proposition}
\label{prop:bijection}
  Let $n$ and $k$ be positive integers such that $n+1\geq k \ge 1$. There exists a bijection
  \begin{equation}
    \label{eq:bij}
    \upepsilon : \UF{[-n, n]}{k} \xrightarrow{\quad\sim\quad} \SS{[-n, n]}{k}
  \end{equation}
  such that
  \begin{equation}
    \label{eq:dim}
    d(\upepsilon(I)) = d(I) + k(k-1)/2 - s_-(I) - s_+(I)
  \end{equation}
  for all $I\in \UF{[-n, n]}{k}$.
\end{proposition}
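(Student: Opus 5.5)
Since the proposition only asserts the \emph{existence} of a bijection $\upepsilon$ satisfying~\eqref{eq:dim}, I would reduce it to an identity of generating polynomials. For $n \ge 0$ and $k\ge 0$ put
\begin{equation*}
F_{n,k}(q) = \sum_{I \in \UF{[-n,n]}{k}} q^{\,d(I) + k(k-1)/2 - s_-(I) - s_+(I)},
\qquad
G_{n,k}(q) = \sum_{J \in \SS{[-n,n]}{k}} q^{\,d(J)},
\end{equation*}
regarded as Laurent polynomials in $q$. Equality $F_{n,k}=G_{n,k}$ means that the fibres of the two weight functions have equal cardinalities. Choosing arbitrary bijections fibre by fibre then produces $\upepsilon$. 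So the whole task is to prove $F_{n,k}=G_{n,k}$, and I would do this by showing that both sides satisfy the same recursion in $n$ with the same initial values. The case $n=0$ is trivial: $[-0,0]=\{0\}$ and both sides equal $1$ for $k=0,1$.

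\textbf{Simplifying the weight.} Because $I$ is unit-free, no pair in $I$ has sum $\pm1$. Hence $s_-(I)$ counts pairs with sum $\ge 0$ and $s_+(I)$ counts pairs with sum $\ge 2$. Therefore $k(k-1)/2 - s_-(I) - s_+(I)$ equals
\begin{equation*}
\#\{\text{pairs with sum} \le -2\} - \#\{\text{pairs with sum} \ge 2\},
\end{equation*}
and the weight of $I$ is
\begin{equation*}
w(I) = d(I) + \#\{\text{pairs with sum}\le -2\} - \#\{\text{pairs with sum}\ge 2\}.
\end{equation*}
This form is antisymmetric under $I\mapsto -I$, and so is $d$ on strict sets. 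Consequently both $F_{n,k}$ and $G_{n,k}$ are invariant under $q\mapsto q^{-1}$. This symmetry is a useful consistency check.

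\textbf{Recursion on the extreme elements $\pm n$.} I would split according to $I\cap\{-n,n\}$.
\begin{itemize}
\item \emph{Strict side.} Removing $n$ (resp.\ $-n$) forces $n-1\notin J$ (resp.\ $-(n-1)\notin J$). After the usual shifts this leads to strict subsets of smaller symmetric intervals: $J\cap[-(n-1),n-1]$ when neither extreme lies in $J$, and subsets of $[-(n-2),n-2]$, possibly after a shift, in the other cases. The exponent changes by $\pm n$ and by the shift. Lemma~\ref{lm:ss} is the tool for recognising the shifted pieces, and it yields a $q$-Pascal-type recursion for $G_{n,k}$.
\item \emph{Unit-free side.} If $n\in I$, the unit-free condition excludes only $1-n$ (since $-1-n$ lies outside the interval). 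The element $n$ forms a pair with sum $\ge 2$ with every other element of $I$ except $-n$. Thus removing $n$ changes $w$ by $n-(k-1)$, or by $n-(k-2)$ when $-n\in I$. The case $-n\in I$ is symmetric, and when both extremes lie in $I$ their effects combine.
\end{itemize}

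\textbf{Main obstacle.} The difficulty is that after removing $n$ from a unit-free set, the remainder lies in $[-n,n-1]$ with the extra condition $1-n\notin I$. This is not again a unit-free subset of a symmetric interval. To close the recursion I would enlarge the family: define $F$ and $G$ for all intervals $[a,b]$ with $a\le 0\le b$, together with boundary conditions of the form ``$a\notin I$'' or ``$b \notin I$''. I would first try to show that forbidding $1-n$ can be traded, by a suitable shift, for a symmetric interval of smaller size, mirroring exactly what happens on the strict side where $n-1$ is forbidden.

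If this bookkeeping becomes unwieldy, the fallback is a more direct route. Write $I=A\sqcup(-C)$ with $A=I\cap[0,n]$ and $C\subset[1,n]$. The unit-free condition then says that no $a\in A$ and $c\in C$ satisfy $|a-c|=1$, and that $\{0,1\}\not\subset A$. One then builds a strictification that merges $A$ and $C$ into a single strict set, in the spirit of the shifted strictification $\upsigma_m$, and checks~\eqref{eq:dim} by counting pairs with sum $\ge 2$ or $\le -2$ between $A$ and $-C$.

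In either approach, verifying that the exponents match in the mixed case where both $\pm n$ (or both an $a$ and a nearby $c$) are present is the step I expect to require the most care.
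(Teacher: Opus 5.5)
\textbf{There is a genuine gap: the identity $F_{n,k}=G_{n,k}$ is never proved.} Your reduction is sound. Since only existence is asserted, it suffices to show that the two weight functions have fibres of equal size, i.e.\ $F_{n,k}=G_{n,k}$. Your rewriting $w(I)=d(I)+\#\{\text{pairs with sum}\le-2\}-\#\{\text{pairs with sum}\ge 2\}$ is correct, and so is your computation of how $w$ changes when $n$ is removed.

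But the identity $F_{n,k}=G_{n,k}$ carries the entire content of the proposition. Your recursion stops at the ``main obstacle'', and the fallback via $I=A\sqcup(-C)$ specifies no strictification and verifies nothing. The remedy you suggest also fails as stated:
\begin{itemize}
\item A translation does not preserve the unit-free condition, since it moves every pairwise sum by twice the translation. So forbidding $1-n$ cannot be ``traded by a shift'' on the unit-free side.
\item Your strict-side description is inaccurate. When exactly one of $\pm n$ lies in $J$, the remainder is a strict subset of a $(2n-2)$-element interval. After a shift, that is $[-(n-1),n-1]$ with one endpoint excluded, not a translate of the $(2n-3)$-element interval $[-(n-2),n-2]$.
\end{itemize}

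\textbf{How to close it.} The obstacle comes from removing one extreme at a time. Instead, split according to $S=I\cap\{-n,n\}$ on both sides, as in the paper's proof of Proposition~\ref{prop:bij}. If $n\in I$ and $-n\notin I$, then $I\setminus\{n\}\subset[-(n-1),n-1]$. The condition $1-n\notin I$ then just says that the left endpoint of this smaller symmetric interval is avoided, so no shift is needed. Write $F^S_{n,k}$ and $G^S_{n,k}$ for the sums over the pieces with $I\cap\{-n,n\}=S$. For $n\ge 2$ you get
\begin{align*}
F^{\varnothing}_{n,k} &= F_{n-1,k}, \qquad F^{\{-n,n\}}_{n,k} = F_{n-2,k-2},\\
F^{\{-n\}}_{n,k} &= q^{k-1-n}\bigl(F^{\varnothing}_{n-1,k-1}+F^{\{-(n-1)\}}_{n-1,k-1}\bigr),\\
F^{\{n\}}_{n,k} &= q^{n-k+1}\bigl(F^{\varnothing}_{n-1,k-1}+F^{\{n-1\}}_{n-1,k-1}\bigr).
\end{align*}
Here $-n$ pairs with each remaining element with sum $\le -2$, because $n-1\notin I$, and symmetrically for $n$. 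The $G^S_{n,k}$ satisfy literally the same recursions. On the strict side, the factors $q^{\pm(k-1)}$ come from the shifts $J'\mapsto J'\mp 1$ that identify $J'=J\setminus\{\mp n\}$ with a strict subset of $[-(n-1),n-1]$ avoiding $\pm(n-1)$. Check $n=0,1$ by hand; induction then gives $F^S_{n,k}=G^S_{n,k}$ for all $S$, hence $F_{n,k}=G_{n,k}$.

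\textbf{Comparison with the paper.} Completed this way, your argument is a genuinely different, non-constructive route. It needs neither Proposition~\ref{prop:eps} nor the cardinality count via $\upeta$. The paper instead builds the explicit map $e_I(t)=t+l^-_I(t)-r^+_I(t)$ and uses the same endpoint decomposition only to prove its injectivity. That yields a concrete bijection $\upepsilon$, which your generating-function argument does not provide.
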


Indeed, for a unit-free set~$I$ the dimension of the Bia\l{}ynicki-Birula cell
corresponding to the $\Gm$-fixed point~$[U_I]$ on~$\BIGr(k, 2n + 1)$
is equal to~$d(I) - s_+(I) - s_-(I) + k(2n + 1 - k)/2$ by Lemma~\ref{lem:dim-tplus},
while the dimension of the Schubert cell corresponding to the strict subset~$\upepsilon(I)$
is equal to~$d(\upepsilon(I)) + k(n + 1 - k)$ by Lemma~\ref{lem:dim-si};
therefore, equation~\eqref{eq:dim} implies equality of the dimensions of the
corresponding cells.

The rest of the paper is devoted to the proof of the above proposition.

\subsection{Cardinality}

Before we construct our rather involved bijection $\upepsilon$,
we give another bijection between the same sets which is much simpler.
This simpler bijection does not satisfy our key condition~\eqref{eq:dim},
but it will allow us to establish easily that the source and target in~\eqref{eq:bij}
are of the same cardinality.

Consider the automorphism~$\upeta$ of the set of integers~$\ZZ$
that changes the sign of all the odd numbers and fixes all the even ones; that is,
\begin{equation*}
\upeta(i) = (-1)^i i.
\end{equation*}
Obviously, the map~$\upeta$ is an involution, and we denote by the same symbol
the induced element-wise action on the elements of $\binom{[-n, n]}{k}$,
which is also an involution.

\begin{lemma}
\label{lm:eta}
  The involution $\upeta:\binom{[-n, n]}{k} \xrightarrow{\sim} \binom{[-n, n]}{k}$ interchanges the subsets
  $\UF{[-n, n]}{k}$ and $\SS{[-n, n]}{k}$.
\end{lemma}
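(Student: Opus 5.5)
The plan is to reduce everything to a statement about pairs of integers. Both properties are conditions on unordered pairs $\{i_1,i_2\}$ of distinct elements of a set. First, $\upeta$ preserves $[-n,n]$, since $|\upeta(i)|=|i|$. It is also injective, being an involution on $\ZZ$. Hence it induces an involution of $\binom{[-n,n]}{k}$, as already noted.

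Because $\upeta$ is an involution, it suffices to prove one inclusion. We show that $I$ unit-free implies $\upeta(I)$ strict, and conversely that $I$ strict implies $\upeta(I)$ unit-free. Together these give that $\upeta$ maps $\UF{[-n,n]}{k}$ into $\SS{[-n,n]}{k}$ and vice versa. Since $\upeta\circ\upeta=\mathrm{id}$, the two restrictions are mutually inverse bijections.

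The key step is the following pairwise claim. For distinct integers $a,b$ we have
\begin{equation*}
a+b=\pm1 \iff \upeta(a)-\upeta(b)=\pm1 .
\end{equation*}
The observation behind it is parity: each of $a+b=\pm1$ and $a-b=\pm1$ forces $a$ and $b$ to have opposite parity. Also, $\upeta$ preserves parity, since $\upeta(i)\equiv i \pmod 2$.

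So we may assume without loss of generality that $a$ is even and $b$ is odd. Then $\upeta(a)=a$ and $\upeta(b)=-b$, so
\begin{equation*}
\upeta(a)-\upeta(b)=a+b .
\end{equation*}
Hence $a+b=\pm1$ if and only if $\upeta(a)-\upeta(b)=\pm1$. If instead $a$ and $b$ have the same parity, then neither side can equal $\pm1$, so both conditions fail and the equivalence holds trivially. Swapping the roles of $a$ and $b$ only swaps the sign, which is harmless because both conditions are symmetric under $\pm$.

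Applying the claim to all pairs in $I$ gives the result: $I$ has no pair summing to $\pm1$ exactly when $\upeta(I)$ has no pair differing by $\pm1$.

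One subtlety is whether the definitions allow $i_1=i_2$. Strictness is unaffected, since $i_1-i_1=0\neq\pm1$. Unit-freeness is also unaffected, since $2i\neq\pm1$. So including the diagonal changes nothing in either definition.

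No step is a real obstacle. The only care needed is the parity case split, and checking that the symmetric $\pm$ in both definitions absorbs the choice of which element is even.
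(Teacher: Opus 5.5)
Your proof is correct and is essentially the paper's argument: both rest on the observation that either condition forces $a$ and $b$ to have opposite parity, so that $\upeta(a)-\upeta(b)=\pm(a+b)$, and your pairwise biconditional just packages into one statement the two directions that the paper handles as ``one direction, the other similarly.'' One small slip: the sentence ``it suffices to prove one inclusion'' is not true on its own, since without a cardinality count an involution mapping $\UF{[-n,n]}{k}$ into $\SS{[-n,n]}{k}$ need not map onto it; this costs nothing, because your biconditional proves both implications.
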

\begin{proof}
Let~$I\in \UF{[-n, n]}{k}$, and assume that~$\upeta(I)$ is not strict;
namely, we have~$i'_1 - i'_2 = \pm 1$ for some~$i'_1, i'_2 \in \upeta(I)$.
Put $i_1=\upeta(i'_1)$ and $i_2=\upeta(i'_2)$.
Note that exactly one of the integers~$i'_1$ and~$i'_2$ is odd,
hence~$i'_1 - i'_2 = \upeta(i_1) - \upeta(i_2) = \pm(i_1 + i_2)$,
and we conclude that~$i_1 + i_2 = \pm 1$, which contradicts the assumption that~$I \in \UF{[-n, n]}{k}$.
This proves that the image of $\UF{[-n, n]}{k}$ lies in $\SS{[-n, n]}{k}$.
The fact that the image of $\SS{[-n, n]}{k}$ lies in $\UF{[-n, n]}{k}$
is proved similarly.
\end{proof}

From Lemma~\ref{lm:eta}, using Lemma~\ref{lm:ss} to compute the cardinalities,
we conclude the following.

\begin{corollary}
\label{cor:card}
  For any positive integers $n$ and $k$, one has
  \begin{equation*}
    \# \UF{[-n, n]}{k} = \# \SS{[-n, n]}{k} = \binom{2n-k+2}{k}.
  \end{equation*}
  In particular, $\# \UF{[-n, n]}{n + 1} = 1$, and $\UF{[-n, n]}{k} = \varnothing$
  for $k > n+1$.
\end{corollary}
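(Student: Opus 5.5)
The plan is to combine the two bijections already at hand. First, Lemma~\ref{lm:eta} shows that the involution~$\upeta$ of~$\binom{[-n,n]}{k}$ maps~$\UF{[-n, n]}{k}$ into~$\SS{[-n, n]}{k}$ and vice versa. Since~$\upeta$ is an involution, these two restrictions are mutually inverse, which gives the first equality $\# \UF{[-n, n]}{k} = \# \SS{[-n, n]}{k}$.

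Next, I would count strict subsets using Lemma~\ref{lm:ss} with $a = 1$, $b = 2n+2-k$ and $m = -n-1$. The target interval is then $[1 - n - 1,\ 2n+2-k - n - 1 + k - 1] = [-n, n]$. So~$\upsigma_{-n-1}$ is a bijection $\binom{[1,2n+2-k]}{k} \xrightarrow{\sim} \SS{[-n,n]}{k}$, exactly as in the proof of Lemma~\ref{lem:dim-si}, and hence $\# \SS{[-n,n]}{k} = \binom{2n+2-k}{k}$.

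One small point needs care when $2n+2-k < k$, or even when $2n+2-k \le 0$: the interval $[1, 2n+2-k]$ then has fewer than~$k$ elements, or none. In that case both sides of the bijection are empty, because Lemma~\ref{lm:ss} holds for arbitrary integers $a, b, m$. The binomial coefficient should be read as~$0$ here. As an independent check, a strict $k$-element subset $\{i_1 < \dots < i_k\}$ satisfies $i_{t+1} - i_t \ge 2$, so $i_k - i_1 \ge 2(k-1)$. This forces $2k - 1 \le 2n+1$, i.e.\ $k \le n+1$.

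The remaining claims then follow immediately. For $k = n+1$ we get $\binom{n+1}{n+1} = 1$. For $k > n+1$ we have $2n+2-k < k$, so the binomial coefficient vanishes and $\UF{[-n,n]}{k} = \varnothing$. There is no real obstacle here; the only step needing attention is this degenerate-range bookkeeping for the binomial coefficient when $k > n+1$.
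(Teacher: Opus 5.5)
Your proposal is correct and follows the paper's own argument. The involution $\upeta$ of Lemma~\ref{lm:eta} identifies $\UF{[-n,n]}{k}$ with $\SS{[-n,n]}{k}$, and the shifted strictification $\upsigma_{-n-1}$ of Lemma~\ref{lm:ss} identifies $\SS{[-n,n]}{k}$ with $\binom{[1,2n+2-k]}{k}$; your remark that the binomial coefficient must be read combinatorially (as $0$ whenever $2n+2-k<k$, including negative top entries) makes explicit a convention the paper leaves implicit.
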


\begin{remark}
  Let us illustrate that $\upeta$ does not satisfy condition~\eqref{eq:dim} from Proposition~\ref{prop:bijection}.

  The simplest case is~$k = 1$. Indeed, one has~$\UF{[-n, n]}{1} = [-n, n]$,
  and if~$I = \{i\}$ with odd~$i$, then~$\upeta(i) = -i$, so the left side of~\eqref{eq:dim} equals~$d(\upeta(I)) = -i$,
  while the right side equals~$d(I) = i$.

  For another example, assume~$k = 2$ and~$I = \{i_1, i_2\}$ where~$i_1$ and~$i_2$ are negative and even.
  Then, $s_\pm(I) = 0$ and~$\upeta(I) = I$, hence the two sides of~\eqref{eq:dim} differ by~$k(k-1)/2 = 1$.
\end{remark}

\subsection{Bijection}

Let~$I = \{i_1, i_2, \ldots, i_k\}$ be a set of integers.
For any integer~$t$, define
\begin{align*}
  L_I(t) &= \{i\in I \mid i < t\},
  &
  l_I(t) &= \# L_I(t),
  \\
  R_I(t) &= \{i\in I \mid i > t\},
  &
  r_I(t) &= \# R_I(t),
\intertext{the sets of the elements of~$I$ that are smaller and greater than~$t$,
respectively\footnotemark,
and denote by $l_I(t)$ and $r_I(t)$ their respective cardinalities.
Further, define}
  L^-_I(t) &= \{i\in L_I(t) \mid i + t < 0 \} \subset L_I(t),
  &
  l^-_I(t) &= \# L^-_I(t),
  \\
  R^+_I(t) &= \{i\in R_I(t) \mid i + t > 0\} \subset R_I(t),
  &
  r^+_I(t) &= \# R^+_I(t),
\end{align*}
subsets of~$L_I(t)$ and~$R_I(t)$,
and denote by $l^-_I(t)$ and $r^+_I(t)$ their respective cardinalities.
\footnotetext{We imagine the elements of~$L_I(t)$ and~$R_I(t)$ as those standing to the left and right of~$t$,
respectively, thus the notation.}

Finally, for any set of integers $I$, define the map $e_I: \ZZ\to \ZZ$ by the formula
\begin{equation}
\label{eq:eii}
  e_I(t) = t + l^-_I(t) - r^+_I(t).
\end{equation}

The following crucial proposition will allow us to define the desired bijection.

\begin{proposition}
  \label{prop:eps}
  For any set $I = \{i_1, i_2, \ldots, i_k\}\in \binom{[-n, n]}{k}$, one has
  \begin{equation*}
    -n \leq e_I(i_1) < e_I(i_2) < \ldots < e_I(i_k) \leq n.
  \end{equation*}
  In particular, there is a map $\upepsilon: \binom{[-n, n]}{k} \to \binom{[-n, n]}{k}$
  given by the formula
  \begin{equation*}
    \upepsilon \colon I\mapsto \{ e_I(i) \mid i\in I \}.
  \end{equation*}
  Further assume that~$I \in \UF{[-n, n]}{k}$.
  Then,~$\upepsilon(I) \in \SS{[-n, n]}{k}$,  
  and~$\pm n \in \upepsilon(I)$ if and only if~$\pm n \in I$.
\end{proposition}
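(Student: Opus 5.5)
The plan is to reduce everything to two estimates: one on consecutive differences $e_I(i_{p+1})-e_I(i_p)$, and one on the extreme values $e_I(i_1)$ and $e_I(i_k)$. I will repeatedly use a symmetry. For $J=-I$ one checks directly from the definitions that $l^-_{-I}(-t)=r^+_I(t)$ and $r^+_{-I}(-t)=l^-_I(t)$. Hence $e_{-I}(-t)=-e_I(t)$, that is, $\upepsilon(-I)=-\upepsilon(I)$. Since $I\mapsto -I$ preserves both $\binom{[-n,n]}{k}$ and $\UF{[-n,n]}{k}$, every statement about the left end ($-n$, $i_1$) implies the corresponding statement about the right end ($n$, $i_k$).

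\emph{Bounds and the claim about $\pm n$.} We have $e_I(i_1)=i_1-r^+_I(i_1)$, because $L_I(i_1)=\varnothing$. Here $R^+_I(i_1)$ consists of elements of $I$ in the interval $(\max(i_1,-i_1),n]$, so $r^+_I(i_1)\le n-|i_1|\le n+i_1$, which gives $e_I(i_1)\ge -n$. If $i_1=-n$ this interval is empty, so $e_I(i_1)=-n$. Conversely, suppose $e_I(i_1)=-n$ with $I$ unit-free. Both inequalities must then be equalities, so $i_1\le 0$ and the whole interval $[1-i_1,n]$ lies in $I$. If $i_1>-n$, this interval is nonempty and contains $1-i_1$, and $i_1+(1-i_1)=1$ contradicts unit-freeness. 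Thus $-n\in\upepsilon(I)$ iff $-n\in I$, once we know $e_I$ is increasing on $I$ (so that $e_I(i_1)$ is the minimum of $\upepsilon(I)$). The upper bound $e_I(i_k)\le n$ and the statement for $+n$ follow by the symmetry above.

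\emph{Consecutive differences.} Fix $p$, write $a=i_p<b=i_{p+1}$, and put $g=b-a\ge 1$; note that no element of $I$ lies strictly between $a$ and $b$. Moving $t$ from $a$ to $b$ changes the counts as follows.
\begin{itemize}
\item $L_I$ gains $a$, which contributes to $l^-_I(b)$ iff $a+b<0$.
\item $l^-_I$ loses the elements $i<a$ with $-b\le i<-a$.
\item $R_I$ loses $b$, which was counted in $r^+_I(a)$ iff $a+b>0$.
\item $r^+_I$ gains the elements $i>b$ with $-b<i\le -a$.
\end{itemize}
Hence
\begin{equation*}
e_I(b)-e_I(a)=g+[a+b<0]+[a+b>0]-\#\{i\in I\setminus\{a,b\}\mid i\in[-b,-a]\}+\varepsilon,
\end{equation*}
where $\varepsilon\in\{0,1\}$ is a boundary correction. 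It comes from the half-open endpoints: $-b$ is counted only on the left side and $-a$ only on the right side. I will write this out carefully as an exact formula rather than with $\varepsilon$. The reflected interval $[-b,-a]$ contains $g+1$ integers. Its intersection with $I\setminus\{a,b\}$ avoids the open interval $(a,b)$, which reduces the count when the two intervals overlap.

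The main step, and the one I expect to be the real obstacle, is a short case analysis on the sign of $a+b$. Its cases are: $a+b<0$, $a+b=0$, $a+b>0$, and the unit-free cases $a+b=\pm1$ are excluded. In each case I would show that the difference is at least $1$ for arbitrary $I$ and at least $2$ for unit-free $I$. The input for the unit-free improvement is that if $c\in I$ then $\pm1-c\notin I$. Applied to $c=a$ and $c=b$, this removes the endpoints $-a\pm1$ and $-b\pm1$ from the reflected interval; applied to $c$ inside the reflected interval, it prevents the interval from being fully occupied. I would check the extreme configurations first, where the reflected interval is entirely inside $I$, since the boundary correction matters exactly there.

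Finally, the strict increase $e_I(i_1)<\dots<e_I(i_k)$ together with the bounds shows that $\upepsilon(I)\in\binom{[-n,n]}{k}$. For unit-free $I$, the gaps of at least $2$ show that $\upepsilon(I)$ is strict, that is, $\upepsilon(I)\in\SS{[-n,n]}{k}$. Combined with the first step, this gives all the assertions of the proposition.
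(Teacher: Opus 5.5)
Your plan is sound, and for the core of the proposition it is the paper's argument. It tracks exactly how $l^-_I$ and $r^+_I$ change from $a=i_p$ to $b=i_{p+1}$, splits into cases on the sign of $a+b$, and uses unit-freeness to remove $-a-1$, resp.\ $-b+1$, from $I$. The paper halves the case analysis via $e_{-I}(-t)=-e_I(t)$ and assumes $a+b\ge 0$.

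You never actually carry out this step, and the merged formula with an unspecified $\varepsilon$ is the wrong way to organize it. Bounding the count over $[-b,-a]$ by $g+1$ only gives a difference $\ge 0$, so everything would hinge on $\varepsilon$. Your four bullet points already give the exact identity $e_I(b)-e_I(a)=g+[a+b\ne 0]-A-B$, where $A=\#\{i\in I\mid i<a,\ -b\le i<-a\}$ and $B=\#\{i\in I\mid i>b,\ -b<i\le -a\}$. Keep $A$ and $B$ apart:
\begin{itemize}
\item If $a+b>0$, then $B=0$, since no $i>b$ can satisfy $i\le -a<b$. Also $A\le\#[-b,-a-1]=g$, and $-a-1\notin I$ when $I$ is unit-free. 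This gives a difference $\ge 1$, resp.\ $\ge 2$.
\item If $a+b<0$, then symmetrically $A=0$ and $B\le g$, with $-b+1\notin I$.
\item If $a+b=0$, both $A$ and $B$ vanish and the difference is $g=2b\ge 2$.
\end{itemize}
Your further remark about the reflected interval not being fully occupied is not needed.

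For the bounds and the $\pm n$ claim you take a slightly different route. The paper first normalizes by negation to $|i_1|\ge|i_k|$. It then gets $e_I(i_1)=i_1$ exactly and shows $e_I(i_k)\le |i_1|$, with equality for unit-free $I$ only when $i_k=|i_1|$; this handles both ends at once. You treat the left end alone via $r^+_I(i_1)\le n-|i_1|$ and transfer to the right end by negation. Your equality analysis is correct: the interval $[1-i_1,n]$ would have to lie in $I$, forcing $1-i_1\in I$. Your route is a little more direct. The paper's normalization yields slightly sharper information, namely $e_I(i_1)=i_1$ and $e_I(i_k)\le|i_1|$.
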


Before proving the proposition, we consider the simplest example.

\begin{example}
\label{ex:k=1}
Assume~$k = 1$.
If~$I = \{i\}$ then obviously~$L^-_I(i) = R^+_I(i) = \varnothing$, hence~$e_I(i) = i$
and therefore~$\upepsilon$ is the identity map on~$\UF{[-n, n]}{1} = [-n, n] = \SS{[-n, n]}{1}$.
\end{example}

The following observation
allows us to simplify many arguments.

\begin{lemma}
\label{lem:negation}
For any set of integers~$I$, we have
\begin{equation*}
e_{-I}(-t) = -e_I(t).
\end{equation*}
\end{lemma}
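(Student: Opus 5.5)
The identity is a direct check from the definitions, so the plan is to compute each of the three terms of $e_{-I}(-t)$ from~\eqref{eq:eii} and compare them with the terms of $e_I(t)$. The only tool needed is the observation that negation $i \mapsto -i$ is a bijection from $I$ to $-I$ which reverses order and preserves the sign of sums after negating both summands.

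First, I identify the left set of $-I$ at $-t$. By definition,
\begin{equation*}
L_{-I}(-t) = \{ j \in -I \mid j < -t \}.
\end{equation*}
Writing $j = -i$ with $i \in I$, the condition $j < -t$ becomes $i > t$. Hence negation gives a bijection $L_{-I}(-t) \to R_I(t)$.

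Next, I track the sign conditions through this bijection. The condition $j + (-t) < 0$ becomes $-(i + t) < 0$, that is, $i + t > 0$. Therefore negation restricts to a bijection $L^-_{-I}(-t) \to R^+_I(t)$, and $l^-_{-I}(-t) = r^+_I(t)$. The symmetric argument runs the same way: $R_{-I}(-t)$ corresponds to $L_I(t)$, and the condition $j - t > 0$ becomes $i + t < 0$. This gives $r^+_{-I}(-t) = l^-_I(t)$.

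Finally, I substitute these into~\eqref{eq:eii}:
\begin{equation*}
e_{-I}(-t) = -t + l^-_{-I}(-t) - r^+_{-I}(-t) = -t + r^+_I(t) - l^-_I(t) = -e_I(t).
\end{equation*}

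No step is a genuine obstacle. The only point requiring care is that the strict inequalities $i + t < 0$ and $i + t > 0$ correspond exactly under negation. They do, because both conditions in the definitions of $L^-$ and $R^+$ are strict, so the boundary case $i + t = 0$ is excluded on both sides.
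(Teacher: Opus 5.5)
Your proposal is correct and follows essentially the same route as the paper: both reduce the identity to $l^-_{-I}(-t) = r^+_I(t)$ and $r^+_{-I}(-t) = l^-_I(t)$ via the negation bijection. The only cosmetic difference is that the paper first rewrites $L^-_I(t)$ and $R^+_I(t)$ as $\{i\in I \mid i < t < -i\}$ and $\{i\in I \mid -i < t < i\}$, while you track the two defining inequalities separately.
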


\begin{proof}
By definition~\eqref{eq:eii}, it is enough to show that
\begin{equation*}
  l^-_{-I}(-t) = r^+_{I}(t)
  \quad
  \text{and}
  \quad
  r^+_{-I}(-t) = l^-_{I}(t).
\end{equation*}
For this, we just note that the definitions of~$L^-_I(t)$ and~$R^+_I(t)$ can be rewritten as
\begin{equation}
\label{eq:lrpm}
  L^-_I(t) = \{i\in I \mid i < t < -i\}
  \quad
  \text{and}
  \quad
  R^+_I(t) = \{i\in I \mid -i < t < i\},
\end{equation}
and it follows immediately that the negation map defines bijections
\begin{equation*}
  L^-_{-I}(-t) \xrightarrow{\ \sim\ } R^+_{I}(t)
  \quad\text{and}\quad
  R^+_{-I}(-t) \xrightarrow{\ \sim\ } L^-_{I}(t),
\end{equation*}
which imply the required equality of cardinalities.
\end{proof}

Now we prove the proposition.

\begin{proof}[Proof of Proposition~\textup{\ref{prop:eps}}]
  The case where~$k = 1$ is clear from Example~\ref{ex:k=1}, so assume~$k \ge 2$.

  To simplify notation, for all $1\leq t \leq k$, we put
  $L_t = L_I(i_t)$, $R_t = R_I(i_t)$,
  $l_t = l_I(i_t)$, $r_t = r_I(i_t)$,
  $L^-_t = L^-_I(i_t)$, $R^+_t = R^+_I(i_t)$,
  $l^-_t = l^-_I(i_t)$, $r^+_t = r^+_I(i_t)$.
  Also write~$\tdi_t = e_I(i_t)$.

  First, we establish that $-n \leq \tdi_1$ and $\tdi_k \leq n$.
  Negating~$I$ if necessary and using Lemma~\ref{lem:negation},
  we may assume~$|i_1| \geq |i_k|$.
  Since~$i_1 < i_k$, we have~$i_1 = -a < 0$ for some~$a \le n$, hence~$i_p \le a$ for all~$p$.

  Since~$i_1$ is the smallest element of~$I$,
  the set $L_I(i_1)$ is empty, and so~$l^-_1 = l_1 = 0$.
  Moreover, since~$i_p + i_1 = i_p - a \le 0$ for all~$2 \le p \le k$, we have~$r^+_1 = 0$ as well.
  We conclude that~$\tdi_1 = i_1 = -a$.

  Similarly, we have~$r^+_k = 0$ since~$i_k$ is the largest element of~$I$, while
  \begin{equation*}
  L_k^- =  \{i_1, i_2, \ldots, i_{k - 1}\} \cap [-a, -i_k-1]
  \end{equation*}
  by definition.
  Thus, $l^-_k \le \#[-a, -i_k-1] = a - i_k$, and we conclude that~$\tdi_k \le a$.
  Moreover, it follows that~$\tdi_k = a$ if and only if~$[-a, -i_k-1] \subset I$,
  and if~$I$ is unit-free this is impossible (since~$i_k \in I$) unless~$i_k = a$.

  It remains to show that $\tdi_{s+1} - \tdi_s \geq 1$, and that~$\tdi_{s+1} - \tdi_s \ge 2$ if $I\in \UF{[-n, n]}{k}$.
  Rewrite the difference $\tdi_{s+1}-\tdi_s$ as follows:
  \begin{equation*}
  \tdi_{s+1}-\tdi_s = (i_{s+1} - i_s) - (r^+_{s+1} - r^+_s) + (l^-_{s+1} - l^-_s).
  \end{equation*}
  Without loss of generality, using negation and Lemma~\ref{lem:negation} again,
  we can assume that~$i_{s+1} + i_s \geq 0$;
  in particular, $i_{s+1} > 0$.
  Note that~$L^-_{s+1} \subset L^-_s$, and by~\eqref{eq:lrpm}, the complement~$L^-_s \setminus L^-_{s+1}$
  is the set of those~$i_p \in I$ which satisfy~$i_p < i_s < -i_p$ and do not satisfy~$i_p < i_{s+1} < -i_p$.
  Since~$i_s < i_{s+1}$, this is equivalent to~$i_p < i_s < -i_p$ and~$i_{s+1} \ge -i_p$, hence
  \begin{equation}
  \label{eq:eps-lmin}
  L^-_s \setminus L^-_{s+1} = \{ i_p \in I \mid i_p < i_s \text{ and } -i_{s+1} \leq i_p < -i_s \} \subset [-i_{s+1}, -i_s-1].
  \end{equation}
  Therefore, $l^-_s - l^-_{s+1}\leq \#[-i_{s+1}, -i_s-1] = i_{s+1} - i_s$.
  Now, we consider two cases.

  If $i_{s+1} + i_s > 0$, then $R^+_{s+1} = \{i_{s+2}, \ldots, i_k\}$
  and $R^+_{s} = \{i_{s+1}, \ldots, i_k\}$, so~$r^+_{s+1} - r^+_s = -1$.
  Thus,
  \begin{equation*}
  \tdi_{s+1}-\tdi_s = (i_{s+1} - i_s) + 1 + (l^-_{s+1} - l^-_s) \ge 1.
  \end{equation*}
  If~$I$ is additionally unit-free, then~$I$ cannot contain the number~$-i_s-1$ since it already contains~$i_s$.
  Thus, $L^-_s \setminus L^-_{s+1} \subset [-i_{s+1}, -i_s-2]$,
  and we conclude that~$l^-_s - l^-_{s+1}\leq i_{s+1} - i_s - 1$, and so~$\tdi_{s+1}-\tdi_s \geq 2$.
  
  If $i_{s+1} + i_s = 0$, then
  $R^+_{s} = R^+_{s+1} = \{i_{s+2}, \ldots, i_k\}$, so $r^+_{s+1} - r^+_s = 0$. Meanwhile, $i_s = -i_{s+1} < 0$, so
  $L^-_s = L^-_{s+1} = \{i_1, i_2, \ldots, i_{s-1}\}$.
  From the latter, we conclude that $l^-_{s+1} = l^-_s$,
  and therefore, we have~$\tdi_{s+1} - \tdi_s = i_{s+1}-i_s \geq 2$, whether or not~$I$ is unit-free.
\end{proof}

To give an idea of how the map~$\upepsilon$ works, we consider the next simplest case.

\begin{example}
  Assume~$k = 2$,
  and let~$I=\{a, b\}$ (by our convention,~$a < b$).
  There are three fundamentally different cases.
  \begin{enumerate}
    \item If $a + b = 0$, so that $I=\{-a, a\}$, then~$\upepsilon(I) = I$.
    \item If $a + b > 0$, then $\upepsilon(I) = \{a-1, b\}$.
    \item If $a + b < 0$, then $\upepsilon(I) = \{a, b+1\}$.
  \end{enumerate}
  For instance, we see that $\upepsilon$ is not injective on~$\binom{[-n, n]}{2}$.
  Indeed, for any~$1\leq t\leq n$, we have
  \begin{equation*}
    \upepsilon(\{-t+1, t\}) = \{-t, t\} = \upepsilon(\{-t, t-1\}).
  \end{equation*}
  However, note that the sets $\{-t+1, t\}$ and $\{-t, t-1\}$ are not unit-free.

  One can easily check that in the specific case $k=2$, the image of $\upepsilon$
  is precisely $\SS{[-n, n]}{2}$, and that the inverse map
  from $\SS{[-n, n]}{2}$ to $\UF{[-n, n]}{2}$ is given by
  \begin{enumerate}[label={\textup{(\arabic*')}}]
    \item If $a + b = 0$ then $\upepsilon^{-1}(I) = I$.
    \item If $a + b > 0$ then $\upepsilon^{-1}(I) = \{a+1, b\}$.
    \item If $a + b < 0$ then $\upepsilon^{-1}(I) = \{a, b-1\}$.
  \end{enumerate} 
  In particular, $\upepsilon$ defines a bijection~$\UF{[-n, n]}{2} \xrightarrow{\ \sim\ } \SS{[-n, n]}{2}$.
\end{example}

In the next proposition we prove bijectivity of~$\upepsilon$ for all~$k$.
Recall that~$J \pm 1 = \{ j \pm 1 \mid j \in J \}$ denotes the shift of~$J$ by~$\pm1$.

\begin{proposition}
  \label{prop:bij}
  The restricted map $\upepsilon \colon \UF{[-n, n]}{k} \to \SS{[-n, n]}{k}$ is bijective.
\end{proposition}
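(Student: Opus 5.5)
Since both sets have the same cardinality $\binom{2n-k+2}{k}$ by Corollary~\ref{cor:card}, and Proposition~\ref{prop:eps} already shows that $\upepsilon$ maps $\UF{[-n, n]}{k}$ into $\SS{[-n, n]}{k}$, it suffices to prove that $\upepsilon$ is injective on unit-free sets. (Equivalently, surjective; I would aim for injectivity, possibly by exhibiting an explicit inverse.) The plan is induction on $k$, with Example~\ref{ex:k=1} as the base case. The inductive step removes an extremal element of $I$ whose image under $e_I$ is determined without reference to the rest of the set.

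Let $I=\{i_1,\dots,i_k\}$ be unit-free and set $J=\upepsilon(I)=\{j_1<\dots<j_k\}$. By Lemma~\ref{lem:negation}, replacing $I$ by $-I$ if needed, we may assume $|i_1|\ge|i_k|$. The proof of Proposition~\ref{prop:eps} then gives $j_1=e_I(i_1)=i_1=-a$, and every other $i_p$ satisfies $i_p\le a$. The key step is to read the case off $J$ alone. Using the endpoint analysis in that proof, I expect the criterion to be $|j_1|\ge|j_k|$ precisely when $|i_1|\ge|i_k|$, since unit-freeness gives $j_k=a$ only if $i_k=a$; ties $i_1=-i_k$ need separate care. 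Granting this, $i_1=j_1$ is recovered from $J$.

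Next, let $I'=I\setminus\{i_1\}$ and compare $e_I(i_p)$ with $e_{I'}(i_p)$ for $p\ge2$. Since $i_1+i_p\le0$, the element $i_1$ never lies in $R^+$. It lies in $L^-_I(i_p)$ exactly when $i_1<i_p<-i_1$, which here means $i_p<a$. Hence
\begin{equation*}
e_I(i_p)=e_{I'}(i_p)+[\,i_p<a\,].
\end{equation*}
Because $i_p=a$ can occur only for $p=k$, and $e_I(i_k)=a$ exactly when $i_k=a$, we get $\upepsilon(I')=(J\setminus\{j_1\})-1$ when $i_k\ne a$. When $i_k=a$, the same shift holds except that the top element $a$ is kept fixed. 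Both cases are decided by whether $j_k=a$, so $\upepsilon(I')$ is determined by $J$. The set $I'$ is unit-free of size $k-1$ in $[-n,n]$, so induction recovers $I'$ from $\upepsilon(I')$, and hence $I=I'\cup\{j_1\}$. This proves injectivity.

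The main obstacle is the second step: showing that the normalization $|i_1|\ge|i_k|$ can be detected from $J$. When $|i_1|=|i_k|$ both normalizations are allowed, and these cases must be reconciled, though they should give the same reconstruction by the symmetry in Lemma~\ref{lem:negation}. When $|i_1|>|i_k|$, I need $|j_k|<|j_1|$, that is $j_k<a$. This follows from $\tdi_k\le a$ with equality only if $i_k=a$, which is exactly the refinement proved in Proposition~\ref{prop:eps}. If this detection fails in some edge case, the fallback is to build an explicit inverse $\SS{[-n,n]}{k}\to\UF{[-n,n]}{k}$ by the same recursive peeling applied to strict sets, and check that it lands in unit-free sets.
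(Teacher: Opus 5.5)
Your argument is correct, and it takes a different route from the paper's.

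\textbf{How the two proofs are organized.} The paper inducts on $n$. It splits $\UF{[-n,n]}{k}$ and $\SS{[-n,n]}{k}$ according to $S=I\cap\{-n,n\}$, the intersection with the endpoints of the ambient interval. It uses the clause ``$\pm n\in\upepsilon(I)$ iff $\pm n\in I$'' of Proposition~\ref{prop:eps} to show that $\upepsilon$ maps each piece into the matching piece. It then writes $\upepsilon(I)$ as a shift of $\upepsilon(I\cap[-n+1,n-1])$. Here $S=\varnothing$ simply shrinks $n$, and $S=\{-n,n\}$ removes two elements at once.

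You instead induct on $k$ at fixed $n$. At each step you peel off a single element of maximal absolute value $a$, which depends on $I$ itself rather than on the ambient interval.

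\textbf{What each route costs and gains.} Your route needs a ``relative'' form of the endpoint clause, with $a=\max_{i\in I}|i|$ in place of $n$. Concretely: $\upepsilon$ preserves which of $|i_1|,|i_k|$ is larger, and $j_k=a$ iff $i_k=a$. Both facts are contained in the endpoint analysis in the proof of Proposition~\ref{prop:eps}, as you say. In return you get a transparent one-element decoding: with $a=-j_1$,
\begin{equation*}
i_1=j_1,\qquad \upepsilon(I\setminus\{i_1\})=\{j_2-1,\dots,j_{k-1}-1,\,j_k-[\,j_k<a\,]\}.
\end{equation*}
The paper's symmetric treatment of $\pm n$ avoids any normalization. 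Both proofs still need Corollary~\ref{cor:card} to pass from injectivity to surjectivity.

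\textbf{The tie case needs no extra work.} The case $|i_1|=|i_k|$ needs no reconciliation, and your fallback is unnecessary. Fix the rule: peel $j_1$ whenever $|j_1|\ge|j_k|$, and otherwise replace $J$ by $-J$. This rule is legitimate for the following reason. If $|i_1|<|i_k|$, apply your strict-case argument to $-I$, using $\upepsilon(-I)=-\upepsilon(I)$ from Lemma~\ref{lem:negation}; this gives $|j_1|<|j_k|$. So $|j_1|\ge|j_k|$ forces $|i_1|\ge|i_k|$ for every preimage. Your formula for $\upepsilon(I\setminus\{i_1\})$ then applies to all preimages at once, including the case $i_k=a$. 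This is exactly what the injectivity argument requires.
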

\begin{proof}
  We use induction on~$n$.

  Recall that~$n \ge k - 1$ by Corollary~\ref{cor:card},
  and if~$n = k - 1$, then~$\UF{[-n, n]}{k}$ and~$\SS{[-n, n]}{k}$ both have cardinality~$1$,
  and so the map~$\upepsilon$ must be bijective.
  This provides the base case.

  Now, assume~$n \ge k$.
  We will prove that $\upepsilon$ is injective~--- bijectivity then follows
  since the cardinalities of the source and the target are the same by Corollary~\ref{cor:card}.

  For brevity of notation, denote $X^n_k=\UF{[-n, n]}{k}$ and $Y^n_k=\SS{[-n, n]}{k}$.
  Both sets get decomposed with respect to the intersection with the set of the endpoints~$\{-n, n\}$ of the interval~$[-n, n]$:
  \begin{equation*}
  X^n_k = \bigsqcup_{S\subset \{-n, n\}} X^n_{k, S},
  \qquad
  Y^n_k = \bigsqcup_{S\subset \{-n, n\}} Y^n_{k, S},
  \end{equation*}
  where $X^n_{k, S} = \{I\in X^n_k \mid I\cap \{-n, n\} = S\}$
  and~$Y^n_{k, S} = \{I\in Y^n_k \mid I\cap \{-n, n\} = S\}$.

  We study how the map~$\upepsilon$ behaves on the subsets~$X^n_{k, S}$ of~$X^n_k$.
  Let~$I' = I \cap [-n+1, n-1]$.
  \begin{itemize}[leftmargin=2em]
  \item
  If~$I = I'$, i.e., $I\in X^n_{k, \varnothing}$, then,
  obviously, $\upepsilon(I) = \upepsilon(I') \in Y^{n-1}_k \subset Y^n_{k, \varnothing}$.
  \item
  If~$I = \{-n\} \sqcup I'$, i.e., $i_1 = -n$ and~$I\in X^n_{k, \{-n\}}$,
  then~$n-1 \not\in I'$ since~$I$ is unit-free; hence, for all~$2 \le p \le k$, we have
  \begin{equation*}
  L^-_{I}(i_p) = \{-n\} \sqcup L^-_{I'}(i_p)\qquad\text{and}\qquad
  R^+_{I}(i_p) = R^+_{I'}(i_p),
  \end{equation*}
  and Proposition~\ref{prop:eps} implies that~$\upepsilon(I) = \{-n\} \sqcup (\upepsilon(I') + 1) \subset Y^n_{k, \{-n\}}$.
  \item
  If~$I = I' \sqcup \{n\}$, i.e., $i_k = n$ and~$I\in X^n_{k, \{n\}}$,
  then, by negation,~$\upepsilon(I) = (\upepsilon(I') - 1) \sqcup \{n\} \subset Y^n_{k, \{n\}}$.

  \item
  If~$I = \{-n\} \sqcup I' \sqcup \{n\}$, i.e., $i_1 = -n$, $i_k = n$, and~$I\in X^n_{k, \{-n, n\}}$,
   then~$\pm(n-1) \not\in I'$ since~$I$ is unit-free; hence, for all~$2 \le p \le k - 1$, we have
  \begin{equation*}
  L^-_{I}(i_p) = \{-n\} \sqcup L^-_{I'}(i_p)\qquad\text{and}\qquad
  R^+_{I}(i_p) = R^+_{I'}(i_p) \sqcup \{n\},
  \end{equation*}
  and Proposition~\ref{prop:eps} implies that~$\upepsilon(I) = \{-n\} \sqcup \upepsilon(I') \sqcup \{n\} \subset Y^n_{k, \{-n, n\}}$.
  \end{itemize}

  It follows that~$\upepsilon(X^n_{k, S}) \subset Y^n_{k, S}$ for each~$S \subset \{-n, n\}$;
  consequently, the images of~$X^n_{k, S}$ under~$\upepsilon$ are disjoint.
  Finally, the inductive hypothesis implies that the restriction of~$\upepsilon$ to each of~$X^n_{k, S}$ is injective,
  and a combination of these two observations implies the injectivity of~$\upepsilon$ on~$X^n_k$.
\end{proof}

\begin{remark}
  It would be extremely interesting to construct an explicit inverse to $\upepsilon$.
\end{remark}

\begin{proof}[Proof of Proposition~\textup{\ref{prop:bijection}}]
  The desired map $\upepsilon$ was constructed in Proposition~\ref{prop:eps},
  and it was shown in Proposition~\ref{prop:bij} that it is a bijection.
  It remains to check that $\upepsilon$ satisfies~\eqref{eq:dim}; that is,
  we need to show that
  \begin{equation}
    \label{eq:dim-diff}
    d(\upepsilon(I)) - d(I) = k(k-1)/2 - s_-(I) - s_+(I).
  \end{equation}

  By definition,
  \begin{equation*}
  s_+(I) = \#\{\{i_1,i_2\} \subset I \mid i_1 + i_2 > 1 \}
  \quad
  \text{and}
  \quad
  s_-(I) = \#\{\{i_1,i_2\} \subset I \mid i_1 + i_2 > -1 \}.
  \end{equation*}
  Put
  \begin{equation*}
  p_+(I) = \#\{\{i_1,i_2\} \subset I \mid i_1 + i_2 > 0 \}
  \quad
  \text{and}
  \quad
  p_-(I) = \#\{\{i_1,i_2\} \subset I \mid i_1 + i_2 < 0 \}.
  \end{equation*}
  For any pair of integers~$\{i_1,i_2\}$, we have either~$i_1 + i_2 < 0$ or~$i_1 + i_2 > -1$, so
  \begin{equation*}
  p_-(I) + s_-(I) = \#\binom{I}{2} = k(k-1)/2.
  \end{equation*}
  On the other hand, if~$I$ is unit-free, then~$i_1 + i_2 \ne 1$,
  and so $i_1 + i_2 > 0$ implies~$i_1 + i_2 > 1$. Thus,
  \begin{equation*}
  p_+(I) = s_+(I).
  \end{equation*}
  Using the obtained equalities, equation~\eqref{eq:dim-diff} can be rewritten as
  \begin{equation*}
    d(\upepsilon(I)) - d(I) = p_-(I) - p_+(I).
  \end{equation*}
  Meanwhile, by the definition of $\upepsilon(I)$, one has
  \begin{equation*}
    d(\upepsilon(I)) - d(I) = \sum_{i\in I} l^-_I(i) - \sum_{j\in I} r^+_I(j),
  \end{equation*}
  and it remains to note that $p_-(I) = \sum_{i\in I} l^-_I(i)$ and
  $p_+(I) = \sum_{j\in I} r^+_I(j)$.
\end{proof}

\bibliography{betti}
\bibliographystyle{alpha}

\end{document}